\newcommand{\BW}{\mathbb{W}}
\newcommand{\complex}{\mathbb{C}}
\newcommand{\eps}{\epsilon}
\newcommand{\cB}{\mathcal{B}}
\newcommand{\cM}{\mathcal{M}}
\newcommand{\cI}{\mathcal{I}}
\newcommand{\bv}{\mathbf{v}}
\newcommand{\bu}{\mathbf{u}}
\newcommand{\bbf}{\mathbf{f}}
\newcommand{\dsp}{\displaystyle}
\newcommand{\dom}{\text{dom}\,}
\newtheorem{theorem}{Theorem}
\newtheorem{remark}{Remark}
\newtheorem{definition}{Definition}
\newtheorem{corollary}{Corollary}
\newtheorem{proposition}{Proposition}
\newtheorem{lemma}{Lemma}
\newtheorem{example}{Example}
\newcommand{\M}[1]{\left({#1}\right)}
\newcommand{\norm}[1]{\left\|{#1}\right\|}
\DeclareMathOperator{\sgn}{sgn}
\DeclareMathOperator{\id}{id}
\DeclareMathOperator{\adj}{adj}
\DeclareMathOperator{\interior}{int}
\begin{document}

\title{Pseudospectra of Isospectrally Reduced\\ Matrices and Systems}

\author[uu]{Fernando Guevara Vasquez}
\ead{fguevara@math.utah.edu}

\author[byu]{Benjamin Z. Webb\corref{cor1}\fnref{tru}}
\ead{bwebb@math.byu.edu}

\address[uu]{Mathematics Dept., JWB 233, University of Utah, Salt Lake City, UT 84112, USA}
\address[byu]{Dept. of Mathematics, 308 TMCB, Brigham Young University, Provo, UT 84602, USA}
\fntext[tru]{{\em Present address:} Laboratory of Statistical Physics, The Rockefeller University, 1230 York Avenue, New York, NY 10065. Phone: XXX-XXX-XXXX. Fax: XXX-XXX-XXXX.}

\cortext[cor1]{Corresponding author}

\begin{abstract}
The isospectral reduction of matrix, which is closely related to its Schur complement, allows to reduce the size of a matrix while maintaining its eigenvalues up to a known set. Here we generalize this procedure by increasing the number of possible ways a matrix can be isospectrally reduced. The reduced matrix has rational functions as entries. We show that the notion of pseudospectrum can be extended to this class of matrices and that the pseudospectrum of a matrix shrinks as the matrix is reduced. Hence the eigenvalues of a reduced matrix are more robust to entry-wise perturbations than the eigenvalues of the original matrix. We also introduce the notion of inverse pseudospectrum (or pseudoresonances), which indicates how stable the poles of a matrix with rational function entries are to certain matrix perturbations. A mass spring system is used to illustrate and give a physical interpretation to both pseudospectra and inverse pseudospectra.
\end{abstract}

\begin{keyword}
Isospectral reduction \sep Schur complement \sep Pseudospectra \sep Frequency response \sep Spring mass network
\MSC[2010] 15A42, 05C50, 82C20
\end{keyword}

\maketitle

\section{Introduction}
The process of isospectrally reducing a matrix was first considered in \cite{Bunimovich:2012:IGT}, where it was shown that a weighted digraph could be reduced while maintaining the eigenvalues of the graph's weighted adjacency matrix, up to a known set. The motivation in this setting was to allow one to simplify the structure of a complicated network (graph) while preserving its spectral information. One of the main results of this paper is that any weighted digraph can be uniquely reduced to a graph on any subset of its nodes via some sequence of isospectral reductions. 

Later it was shown in \cite{Bunimovich:2011:IGR} that such matrix reductions could be used to improve the classical eigenvalue estimates of Gershgorin, Brauer, Brualdi, and Varga \cite{Gershgorin:1931:UDA,Brauer:1947:LCR,Brualdi:1982:MED,Varga:2004:GC}. Specifically, the eigenvalue estimates associated with Gershgorin and Brauer both improve for any matrix reduction and can be successively improved by further matrix reductions. The eigenvalue estimates of Brualdi and Varga are more complicated but can be shown to improve for specific types of matrix reductions.

In this paper we generalize this previous work by first showing that a matrix can be isospectrally reduced over any of its principal submatrices, under mild conditions. This is an improvement over the isospectral reduction method presented in \cite{Bunimovich:2011:IGR,Bunimovich:2012:IGT,Bunimovich:2012:IC}, since in these three papers a submatrix is required to have a particular form in order for the reduction to be defined. This fundamental improvement allows us to avoid the sequence of reductions that were previously necessary for certain matrix reductions. We prove in a more general setting that a sequence of reductions still leads to a uniquely defined matrix that depends only on the final reduction (see theorem \ref{theorem2}).

As defined in \cite{Bunimovich:2011:IGR} a matrix with rational function entries has both a spectrum and an inverse spectrum. When a square matrix is isospectrally reduced, the result is a smaller matrix that again has a spectrum and an inverse spectrum. The relation between the spectrum and inverse spectrum of the reduced and unreduced matrices is dictated by the specific submatrix over which the matrix is reduced (see theorem \ref{maintheorem}).

Expanding on the work done in \cite{Bunimovich:2011:IGR}, we show that it is possible to not only use the eigenvalue estimates associated with Gershgorin to estimate the eigenvalues of a matrix, but to estimate its inverse eigenvalues. This is done by introducing the concept of the spectral inverse of a matrix, i.e. the matrix in which the eigenvalues are the inverse eigenvalues of the original matrix and vice versa. Therefore, the results found in \cite{Bunimovich:2011:IGR} allow us to give estimates of the inverse eigenvalues of a matrix and use matrix reductions to improve them (see theorem \ref{theorem4}).

Another reason we care about isospectral reductions is that they naturally arise in network models when we do not have access to all the network nodes. We use a mass spring network to illustrate this: the isospectral reduction amounts to the response of the network where we only have access to some terminal nodes (see example \ref{ex:spring0}). In the case where all nodes are accessible (i.e. all nodes are terminal nodes), the eigenvalues correspond to frequencies for which there is a non-zero node displacement that results in zero forces. For the reduced matrix, the eigenvalues indicate frequencies for which a non-zero displacement of the terminal nodes generates zero forces at the terminals. The inverse eigenvalues of the reduced matrix correspond to resonance frequencies, i.e. frequencies for which there is an extremely large force generated by a finite displacement of the terminals.

The pseudospectrum of a matrix gives us the scalars that behave like eigenvalues to within a certain tolerance. This concept is particularly useful in analyzing the properties of matrices that are non-normal, i.e. do not have an orthogonal eigenbasis. The pseudospectrum of a complex valued matrix has been introduced independently many times (see \cite{Trefethen:2005:SP} for details). It has also been studied in the case of matrix polynomials \cite{Lancaster:2005:PMP,Boulton:2008:PMP}. Here, we extend the definition of pseudospectrum to matrices with rational function entries. By use of the spectral inverse we also define the inverse pseudospectrum of a matrix in Section \ref{sec:psire}.

As with complex valued matrices, the pseudospectra we define for a matrix with rational function entries is a subset of the complex plane whose elements behave, within some tolerance, as eigenvalues. Similarly, the inverse pseudospectra of a matrix is the set of scalars that act as inverse eigenvalues for a given tolerance. For the mass spring network we consider, the pseudospectra of the stiffness matrix are the values for which there are node displacements that generate forces that are {\em small} relative to the displacement. The same is true of inverse pseudospectra, except these give way to forces that are {\em large} relative to the displacement. A given tolerance determines how ``large'' and ``small'' these forces are.

We show that the pseudospectra of a reduced matrix are always contained in the pseudospectra of the original matrix for a given tolerance. This implies that the eigenvalues of a reduced matrix are less susceptible to perturbations than the original one. 

The paper is organized as follows. Section \ref{sec:matred} introduces and extends the theory of isospectral matrix reductions. This section also includes the spectral inverse of a matrix along with the Gershgorin type estimates of a matrix' inverse eigenvalues. In Section \ref{sec:psi} we define the pseudospectrum and inverse pseudospectrum of a matrix with rational function entries and show that the pseudospectrum of a matrix shrinks in size as the matrix is reduced. Throughout the paper we consider numerous examples, including the mass spring network mentioned above, which is used to give a physical interpretation to the concepts introduced in this paper.

\section{Isospectral Matrix Reductions}\label{sec:matred}
In the first part of this paper we introduce the class of matrices we wish to consider; namely those matrices which have rational function entries. The reason we consider this class of matrices, as mentioned in the introduction, is that such matrices arise naturally if we wish or need to reduce the size of a matrix (or system) we are considering while maintaining its spectral properties. This procedure of isospectrally reducing a matrix and describing the spectrum of such matrices is the main focus of this section.

\subsection{Matrices with Rational Function Entries}\label{sec:mat}
The class of matrices we consider are those square matrices whose entries are rational functions of $\lambda$. Specifically, let $\complex[\lambda]$ be the set of polynomials in the complex variable $\lambda$ with complex coefficients. We denote by $\BW$ the set of rational functions of the form $$w(\lambda)=p(\lambda)/q(\lambda)$$
where $p(\lambda),q(\lambda)\in\complex[\lambda]$ are polynomials having no common linear factors and where $q(\lambda)$ is not identically zero.

More generally, each rational function $w(\lambda)\in\mathbb{W}$ is expressible in the form
$$w(\lambda)=\frac{a_i\lambda^i+a_{i-1}\lambda^{i-1}+\dots+a_0}{b_j\lambda^j+b_{j-1}\lambda^{j-1}+\dots+b_0}$$ where, without loss in generality, we can take $b_j=1$. The domain of $w(\lambda)$ consists of all but a finite number of complex numbers for which $q(\lambda)=b_j\lambda^j+b_{j-1}\lambda^{j-1}+\dots+b_0$ is zero.

Addition and multiplication on the set $\mathbb{W}$ are defined as follows. For $p(\lambda)/q(\lambda)$ and $r(\lambda)/s(\lambda)$ in $\mathbb{W}$ let
\begin{align}
\label{eq:add}\left(\frac{p}{q}+\frac{r}{s}\right)(\lambda)=& \frac{p(\lambda)s(\lambda)+q(\lambda)r(\lambda)}{q(\lambda)s(\lambda)}; \ \text{and}\\
\label{eq:mult}\left(\frac{p}{q}\cdot\frac{r}{s}\right)(\lambda)=&\frac{p(\lambda)r(\lambda)}{q(\lambda)s(\lambda)}
\end{align}
where the common linear factors in the right hand side of equations (\ref{eq:add}) and (\ref{eq:mult}) are canceled. The set $\mathbb{W}$ is then a field under addition and multiplication.

Because we are primarily concerned with the eigenvalues of a matrix, which is a set that includes multiplicities, we note the following. The element $\alpha$ of the set $A$ that includes multiplicities has \textit{multiplicity} $m$ if there are $m$ elements of $A$ equal to $\alpha$. If $\alpha\in A$ with multiplicity $m$ and $\alpha\in B$ with multiplicity $n$ then\\
\indent (i) the \textit{union} $A\cup B$ is the set in which $\alpha$ has multiplicity $m+n$; and\\
\indent (ii) the \textit{difference} $A-B$ is the set in which $\alpha$ has multiplicity $m-n$ if $m-n>0$ and where $\alpha\notin A-B$ otherwise.

\begin{definition}\label{def1.1}
Let $\BW^{n\times n}$ denote the set of $n\times n$ matrices with entries in $\BW$. For a matrix $M(\lambda)\in\BW^{n\times n}$ the determinant $$\det\big(M(\lambda)-\lambda I\big) = p(\lambda)/q(\lambda)$$
for some $p(\lambda)/q(\lambda)\in\mathbb{W}$. The {\em spectrum} (or eigenvalues) of $M(\lambda)$ is defined as
\[
\sigma\M{M} = \{\lambda\in\complex:p(\lambda)=0\}.
\]
The {\em inverse spectrum} (or \emph{resonances}) of $M(\lambda)$ is defined as
\[
 \sigma^{-1}\M{M} = \{\lambda\in\complex:q(\lambda)=0\}.
\]
\end{definition}

Both $\sigma(M)$ and $\sigma^{-1}(M)$ are understood to be sets that include multiplicities. For example, if the polynomial $p(\lambda)\in\mathbb{C}[\lambda]$ factors as
$$p(\lambda)=\prod_{i=1}^m(\lambda-\alpha_i)^{n_i} \ \ \text{for} \ \alpha_i\in\mathbb{C} \ \text{and} \ n_i\in\mathbb{N}$$
then $\{\lambda\in\mathbb{C}:p(\lambda)=0\}$ is the set in which $\alpha_i$ has multiplicity $n_i$.

\begin{remark}
Since $\mathbb{C}\subset\mathbb{W}$, definition \ref{def1.1} is an extension of the standard definition of the eigenvalues of a matrix to the larger class of matrices $\mathbb{W}^{n\times n}$. In particular, if $M\in\mathbb{C}^{n\times n}$ then $\sigma(M)$ are the standard eigenvalues of $M$.
\end{remark}

In what follows we may, for convenience, suppress the dependence of the matrix $M(\lambda)\in\mathbb{W}^{n\times n}$ on $\lambda$ and simply write $M$. One reason for this is that for much of what we do in this paper we do not evaluate $M(\lambda)$ at any particular point $\lambda\in\mathbb{C}$. Rather, we consider $M$ formally as a matrix with rational function entries.

However, when we do consider the matrix $M(\lambda)\in\mathbb{W}^{n\times n}$ to be a function of $\lambda$ we mean $M$ is the function
$$M:\dom(M)\rightarrow\mathbb{C}^{n\times n},$$
where $\dom(M)$ are the complex numbers $\lambda$ for which every entry of $M(\lambda)$ is defined. Surprisingly, it may be the case that $\sigma(M)\nsubseteq \dom(M)$ as the following example shows.

\begin{example}\label{ex:0}
Consider the matrix $M(\lambda)\in\mathbb{W}^{2\times 2}$ given by
$$M(\lambda)=\left[\begin{array}{cc}
0&\frac{1}{\lambda}\\
0&0
\end{array}\right].$$
As one can compute, $\det(M(\lambda)-\lambda I)=\lambda^2$ implying $\sigma(M)=\{0,0\}$. Therefore, $\sigma(M)\nsubseteq \dom(M)$.
\end{example}

\subsection{Isospectral Matrix Reductions}\label{sec:imr}
We can now describe an {\em isospectral reduction} of a matrix $M\in\mathbb{W}^{n\times n}$. We then compare the spectrum of $M$ to the spectrum of its isospectral reduction.

Let $M\in\mathbb{W}^{n\times n}$ and $N=\{1,\ldots,n\}$. If the sets $\mathcal{R},\mathcal{C}\subseteq N$ are non-empty we denote by $M_{\mathcal{R}\mathcal{C}}$ the $|\mathcal{R}| \times |\mathcal{C}|$ \emph{submatrix} of $M$ with rows indexed by $\mathcal{R}$ and columns by $\mathcal{C}$. Suppose the non-empty sets $\mathcal{B}$ and $\mathcal{I}$ form a partition of $N$. The {\em Schur complement} of $M_{\mathcal{II}}$ in $M$ is the matrix
\begin{equation}
M/M_{\mathcal{II}} = M_{\mathcal{BB}} - M_{\mathcal{BI}} M_{\mathcal{II}}^{-1} M_{\mathcal{IB}},
 \label{eq:schur}
\end{equation}
assuming $M_{\mathcal{II}}$ is invertible.

The Schur complement arises in many applications. For example, if the matrix $M$ is the Kirchhoff matrix of a network of resistors with $n$ nodes then its Schur complement is the Dirichlet to Neumann (or voltage to currents) map of the network given by considering the nodes in $\mathcal{B}$ as terminal or \emph{boundary} nodes and the nodes in $\mathcal{I}$ as \emph{interior} nodes (see e.g. \cite{Curtis:1998:cpg}). A physical interpretation of an isospectral reduction is given in example~\ref{ex:spring0}.

We are now ready to define the isospectral reduction of a matrix $M\in\BW^{n\times n}$.

\begin{definition}\label{def1}
For $M(\lambda) \in\BW^{n\times n}$ let $\mathcal{B}$ and $\mathcal{I}$ form a non-empty partition of $N$. The {\em isospectral reduction} of $M$ over the set $\mathcal{B}$ is the matrix
\begin{equation}
 R_{\lambda}(M;\mathcal{B})=M_{\mathcal{BB}} - M_{\mathcal{BI}}(M_{\mathcal{II}}-\lambda I)^{-1} M_{\mathcal{IB}} \in \BW^{|\mathcal{B}|\times|\mathcal{B}|}.
 \label{eq:isred}
\end{equation}
if the matrix $M_{\mathcal{II}}-\lambda I$ is invertible.
\end{definition}

Note that the reduced matrix $R_{\lambda}(M;\mathcal{B})$ is a Schur complement plus a multiple of the identity:
\begin{equation}\label{eq:sch}
R_{\lambda}(M;\mathcal{B})=(M-\lambda I)/(M_{\mathcal{II}}-\lambda I)+\lambda I.
\end{equation}
More often than not we suppress the dependence of $R_{\lambda}(M;\mathcal{B})$ on $\lambda$ and instead write it as $R(M;\mathcal{B})$.

\begin{example}\label{ex:2}
Consider the matrix $M\in\BW^{6\times 6}$ with $(0,1)$-entries given by
$$M=\left[
\begin{array}{cccccc}
0&0&1&1&0&0\\
0&1&0&0&1&1\\
1&0&1&0&0&0\\
0&1&0&1&0&0\\
1&0&0&0&0&0\\
0&1&0&0&0&0
\end{array}
\right].$$
For $\mathcal{B}=\{1,2\}$ and $\mathcal{I}=\{3,4,5,6\}$ we have
$$(M_{\cI\cI}-\lambda I)^{-1}=\left[
\begin{array}{cccc}
\frac{1}{1-\lambda}&0&0&0\\
0&\frac{1}{1-\lambda}&0&0\\
0&0&-\frac{1}{\lambda}&0\\
0&0&0&-\frac{1}{\lambda}
\end{array}
\right].$$
The isospectral reduction of $M$ over $\mathcal{B}=\{1,2\}$ is then defined as
\[
\begin{aligned}
R(M;\mathcal{B})&=
\left[
\begin{array}{cc}
0&0\\
0&1
\end{array}
\right]-
\left[
\begin{array}{cccc}
1&1&0&0\\
0&0&1&1
\end{array}
\right]
\left[
\begin{array}{cccc}
\frac{1}{1-\lambda}&0&0&0\\
0&\frac{1}{1-\lambda}&0&0\\
0&0&-\frac{1}{\lambda}&0\\
0&0&0&-\frac{1}{\lambda}
\end{array}
\right]
\left[
\begin{array}{cc}
1&0\\
0&1\\
1&0\\
0&1
\end{array}
\right].\\
&=\left[
\begin{array}{cc}
\frac{1}{\lambda-1}&\frac{1}{\lambda-1}\\
\frac{1}{\lambda}&\frac{\lambda+1}{\lambda}
\end{array}
\right]\in\BW^{2\times 2}.
\end{aligned}
\]
\end{example}

If a matrix has an isospectral reduction the spectrum and inverse spectrum of the isospectral reduction and the original matrix are related as follows.

\begin{theorem}\label{maintheorem}\textbf{(Spectrum and Inverse Spectrum of Isospectral Reductions)}\\
For $M(\lambda) \in\BW^{n\times n}$ let $\mathcal{B}$ and $\mathcal{I}$ form a non-empty partition of $N$. If $R_{\lambda}(M;\mathcal{B})$ exists then its spectrum and inverse spectrum are given by
\begin{align*}
\sigma\big(R(M;\mathcal{B})\big)&= \M{\sigma(M)\cup\sigma^{-1}(M_{\mathcal{II}})}-
            \M{\sigma (M_{\mathcal{II}})\cup\sigma^{-1}(M)};
	    ~\text{and}\\
\sigma^{-1}\big(R(M;\mathcal{B})\big)&=\M{\sigma(M_{\mathcal{II}})\cup\sigma^{-1}(M)}-\M{\sigma(M)\cup\sigma^{-1} (M_{\mathcal{II}})}.
\end{align*}
\end{theorem}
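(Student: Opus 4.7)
The plan is to reduce everything to the Schur complement determinant identity, applied to the matrix $M-\lambda I$ over the field $\BW$. The starting point is equation (\ref{eq:sch}), which rewrites the reduced matrix as
\[
R(M;\cB) - \lambda I = (M-\lambda I)/(M_{\cI\cI}-\lambda I).
\]
Since $\BW$ is a field and $M_{\cI\cI}-\lambda I$ is invertible (as a matrix over $\BW$) by hypothesis, the classical block-determinant identity
\[
\det(M-\lambda I) = \det(M_{\cI\cI}-\lambda I)\cdot\det\bigl(R(M;\cB)-\lambda I\bigr)
\]
holds verbatim, interpreted as an equality of elements of $\BW$. Rearranging yields
\[
\det\bigl(R(M;\cB)-\lambda I\bigr) = \frac{\det(M-\lambda I)}{\det(M_{\cI\cI}-\lambda I)}.
\]

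Next I would translate this rational identity into the statement about multisets of zeros and poles. Write $\det(M-\lambda I) = p/q$ and $\det(M_{\cI\cI}-\lambda I) = r/s$ in reduced form (no common linear factor between numerator and denominator in each), so that by Definition~\ref{def1.1}, $\sigma(M)$ and $\sigma^{-1}(M)$ are the multisets of roots of $p$ and $q$, and similarly for $M_{\cI\cI}$. Substituting gives
\[
\det\bigl(R(M;\cB)-\lambda I\bigr) = \frac{p\,s}{q\,r}.
\]
The multiset of roots of the numerator $ps$ is $\sigma(M)\cup\sigma^{-1}(M_{\cI\cI})$ and that of the denominator $qr$ is $\sigma(M_{\cI\cI})\cup\sigma^{-1}(M)$, with multiplicities adding as prescribed by the union convention in the paper.

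To read off $\sigma(R(M;\cB))$ and $\sigma^{-1}(R(M;\cB))$ I must put $ps/(qr)$ in reduced form, i.e.\ cancel any common linear factor $(\lambda-\alpha)$ between numerator and denominator. For a fixed $\alpha\in\complex$ let $m$ be its multiplicity in $ps$ and $n$ its multiplicity in $qr$; after cancellation $\alpha$ appears with multiplicity $\max(m-n,0)$ in the numerator and $\max(n-m,0)$ in the denominator. This is exactly the multiset difference defined in Section~\ref{sec:mat}, so the conclusion
\begin{align*}
\sigma\bigl(R(M;\cB)\bigr) &= \bigl(\sigma(M)\cup\sigma^{-1}(M_{\cI\cI})\bigr) - \bigl(\sigma(M_{\cI\cI})\cup\sigma^{-1}(M)\bigr),\\
\sigma^{-1}\bigl(R(M;\cB)\bigr) &= \bigl(\sigma(M_{\cI\cI})\cup\sigma^{-1}(M)\bigr) - \bigl(\sigma(M)\cup\sigma^{-1}(M_{\cI\cI})\bigr)
\end{align*}
follows immediately.

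The only point that needs justification beyond routine bookkeeping is the Schur determinant identity in the first step; but its usual proof (block LU factorization of $\begin{pmatrix}A&B\\C&D\end{pmatrix}$ using $D^{-1}$) is a purely algebraic manipulation valid over any commutative field, and $\BW$ is a field. I expect the main bookkeeping obstacle will be the careful multiplicity accounting in the cancellation step --- specifically checking that the definitions of ``union'' and ``difference'' of multisets from Section~\ref{sec:mat} align exactly with the operations of multiplying polynomials and reducing a rational function to lowest terms. Once that correspondence is made explicit, the theorem follows with no further computation.
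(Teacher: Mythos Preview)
Your proposal is correct and follows essentially the same route as the paper's proof: derive $\det(R(M;\cB)-\lambda I)=\det(M-\lambda I)/\det(M_{\cI\cI}-\lambda I)$ from the Schur complement determinant identity, write each determinant as a reduced rational function, and then read off the spectrum and inverse spectrum via the multiset union/difference bookkeeping. The only cosmetic differences are that the paper works directly from the block form rather than quoting equation~(\ref{eq:sch}), and it is slightly terser about the cancellation step you spell out explicitly.
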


\begin{proof}
For $M\in\BW^{n\times n}$, we may assume without loss of generality that $M$ has the block matrix form
\begin{equation}\label{eq3.1}
M=\begin{bmatrix}
M_{\mathcal{II}} & M_{\mathcal{IB}}\\
M_{\mathcal{BI}} & M_{\mathcal{BB}}
\end{bmatrix}
\end{equation}
where $M_{\mathcal{II}}-\lambda I$ is invertible.

Note that the determinant of a matrix and that of its Schur complement are related by the identity
\begin{equation}\label{eq:detschur}
 \det\begin{bmatrix}
 A & B\\
 C & D
 \end{bmatrix}
 =\det(A)\cdot\det(D-CA^{-1}B),
\end{equation}
provided the submatrix $A$ is invertible. Using this identity on the matrix $M-\lambda I$ yields
\[
 \det(M-\lambda I)=\det(M_{\mathcal{II}}-\lambda I)
              \cdot\det\M{(M_{\mathcal{BB}}-\lambda I)-M_{\mathcal{BI}} (M_{\mathcal{II}}-\lambda I)^{-1}M_{\mathcal{IB}}}.
\]
Therefore,
\begin{equation*}
 \det\big(R(M;\mathcal{B})-\lambda I\big)=\frac{\det(M-\lambda I)}{\det(M_{\mathcal{II}}-\lambda I)}.
\end{equation*}
To compare the eigenvalues of $R(M;\mathcal{B})$, $M$, and $M_{\mathcal{II}}$ write
\[
\det(M-\lambda I)=\frac{p(\lambda)}{q(\lambda)}\ \ \text{and} \ \ \det(M_{\cI\cI}-\lambda I)=\frac{t(\lambda)}{u(\lambda)},
\]
for some $p/q,t/u\in\mathbb{W}$. Hence,
\[
 \det\big(R(M;\mathcal{B})-\lambda I\big)=\frac{p(\lambda)u(\lambda)}{q(\lambda)t(\lambda)}.
\]
Let $P=\{\lambda\in\complex:p(\lambda)=0\}$, $Q=\{\lambda\in\complex:q(\lambda)=0\}$, $T=\{\lambda\in\complex:t(\lambda)=0\}$, and $U=\{\lambda\in\complex:u(\lambda)=0\}$, with multiplicities. By canceling common linear factors, definition \ref{def1.1} implies
\begin{align*}
\sigma\big(R(M;\mathcal{B})\big)=&\{\lambda\in\complex:p(\lambda)u(\lambda)=0\}-\{\lambda\in\complex: q(\lambda)t(\lambda)=0\}\\
=&(P\cup U)-(Q\cup T); \ \text{and}\\
\sigma^{-1}\big(R(M;\mathcal{B})\big)=&\{\lambda\in\complex: q(\lambda)t(\lambda)=0\}-\{\lambda\in\complex:p(\lambda)u(\lambda)=0\}\\
=&(Q\cup T)-(P\cup U).
\end{align*}
Since $P=\sigma(M)$, $Q=\sigma^{-1}(M)$, $T=\sigma^{-1}(M_{\mathcal{II}})$, and $R=\sigma(M_{\mathcal{II}})$ the result follows.
\end{proof}

Since a matrix $M\in\mathbb{C}^{n\times n}$ has no inverse spectrum (i.e. $\sigma^{-1}(M) = \emptyset$), theorem~\ref{maintheorem} applied to complex valued matrices has the following corollary.

\begin{corollary}\label{cor1}
For $M(\lambda) \in\complex^{n\times n}$ let $\mathcal{B}$ and $\mathcal{I}$ form a non-empty partition of $N$. Then
\[
\sigma\big(R(M;\mathcal{B})\big)=\sigma(M)-\sigma(M_{\cI\cI})
\ \ \text{and} \ \
\sigma^{-1}\big(R(M;\mathcal{B})\big) =\sigma(M_{\cI\cI})-\sigma(M).
\]
\end{corollary}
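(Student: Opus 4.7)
The plan is to observe that this corollary follows almost immediately from Theorem~\ref{maintheorem} once we argue that complex-valued matrices have empty inverse spectra.

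First I would verify that $R(M;\mathcal{B})$ is well-defined so the hypothesis of Theorem~\ref{maintheorem} is satisfied. Since $M_{\mathcal{II}}\in\complex^{|\mathcal{I}|\times|\mathcal{I}|}$, the characteristic polynomial $\det(M_{\mathcal{II}}-\lambda I)$ is a monic polynomial in $\lambda$ of degree $|\mathcal{I}|$, hence not identically zero. Therefore $M_{\mathcal{II}}-\lambda I$ is invertible as an element of $\BW^{|\mathcal{I}|\times|\mathcal{I}|}$, and Definition~\ref{def1} produces a legitimate reduced matrix $R(M;\mathcal{B})\in\BW^{|\mathcal{B}|\times|\mathcal{B}|}$.

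Next I would record that any $A\in\complex^{k\times k}$ has empty inverse spectrum. Indeed $\det(A-\lambda I)\in\complex[\lambda]$, so writing this rational function in lowest terms as $p(\lambda)/1\in\BW$ and applying Definition~\ref{def1.1} gives $\sigma^{-1}(A)=\{\lambda\in\complex:1=0\}=\emptyset$. Applying this to both $M$ and $M_{\mathcal{II}}$ shows that $\sigma^{-1}(M)=\sigma^{-1}(M_{\mathcal{II}})=\emptyset$.

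Finally I would substitute these empty sets into the two identities of Theorem~\ref{maintheorem}. Since $S\cup\emptyset=S$ and $S-\emptyset=S$ as multisets, the formulas collapse to
\[
\sigma\big(R(M;\mathcal{B})\big)=\sigma(M)-\sigma(M_{\mathcal{II}}),\qquad
\sigma^{-1}\big(R(M;\mathcal{B})\big)=\sigma(M_{\mathcal{II}})-\sigma(M),
\]
which is exactly the claim. There is no real obstacle here: the argument is a direct specialization of the preceding theorem, with the only mild subtlety being the book-keeping of multiplicities, which is immediate from the definitions of union and difference for multisets given earlier in the section.
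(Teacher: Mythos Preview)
Your proposal is correct and follows exactly the paper's approach: the corollary is stated immediately after Theorem~\ref{maintheorem} with the one-line justification that $\sigma^{-1}(M)=\emptyset$ for $M\in\mathbb{C}^{n\times n}$, and you have simply fleshed out that remark (additionally checking existence of the reduction and that $\sigma^{-1}(M_{\mathcal{II}})=\emptyset$ as well). There is nothing to add.
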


\begin{example}\label{ex:3}
Let $M$, $\mathcal{B}$ and $\mathcal{I}$ be as in example~\ref{ex:2}. As one can compute $\sigma(M)=\{2,-1,1,1,0,0\}$ and $\sigma(M_{\mathcal{II}})=\{1,1,0,0\}$. By corollary~\ref{cor1} we then have
\[
\begin{aligned}
\sigma\big(R(M;\mathcal{B})\big)& = \{2,-1,1,1,0,0\}-\{1,1,0,0\} = \{2,-1\}; \ and\\
\sigma^{-1}\big(R(M;\mathcal{B})\big)& = \{1,1,0,0\}-\{2,-1,1,1,0,0\} = \emptyset.
\end{aligned}
\]
Observe that, by reducing $M$ over $\mathcal{B}$ we lose the eigenvalues corresponding to the ``interior'' degrees of freedom $\sigma(M_{\mathcal{II}})=\{1,1,0,0\}$. That is, if we knew both $\sigma(M_{\mathcal{II}})$ and  $\sigma(R(M;\mathcal{B}))$ but not $\sigma(M)$, then corollary~\ref{cor1} states that the set $\sigma(M_{\mathcal{II}})$ is the most by which $\sigma(R(M;\mathcal{B}))$ and $\sigma(M)$ can differ.
\end{example}

Theorem \ref{maintheorem} therefore describes exactly which eigenvalues we may gain from an isospectral reduction and which we may lose. In this way an isospectral reduction of a matrix preserves the spectral information of the original matrix. However, it may not always be possible to find an isospectral reduction of a matrix $M\in\mathbb{W}^{n\times n}$.

For instance, consider the matrix $M\in\mathbb{W}^{2\times 2}$ given by
\begin{equation}\label{mat1}
M=\left[\begin{array}{cc}
1&1\\
1&\lambda
\end{array}\right].
\end{equation}
For $\mathcal{B}=\{1\}$ and $\mathcal{I}=\{2\}$ note that $M_{\mathcal{II}}-\lambda I=[0]$, which is not invertible. Therefore, $M$ cannot be isospectrally reduced over $\mathcal{B}$.

In general there is no way to know beforehand if the isospectral reduction $R(M;\mathcal{B})$ exists without computing $(M_{\mathcal{II}}-\lambda I)^{-1}$. However, the following subset of $\mathbb{W}^{n\times n}$ can always be reduced over any nonempty subset $\mathcal{B}\subset N$.

For any polynomial $p(\lambda)\in\complex[\lambda]$, let $\deg(p)$ denote its degree. If $w(\lambda)=p(\lambda)/q(\lambda)$ where both $p(\lambda),q(\lambda)\in\complex[\lambda]$ are not identically zero we define the degree of the rational function $w(\lambda)$ by $$\pi(w)=\deg(p)-\deg(q).$$ In the case where $p(\lambda)=0$ we let $\pi(w)=0$.

\begin{definition}
We denote by $\BW_{\pi}$ the set of rational functions
$$\BW_{\pi}=\{w\in\BW:\pi(w)\leq0\}$$
and let $\BW^{n\times n}_{\pi}$ be the set of $n\times n$ matrices with entries in $\BW_{\pi}$.
\end{definition}

\begin{lemma}\label{lemma1}
If $M(\lambda) \in\BW^{n\times n}_{\pi}$ and $\mathcal{B}\subset N$ is non-empty then $R_{\lambda}(M;\mathcal{B})\in\mathbb{W}_{\pi}^{|\mathcal{B}|\times |\mathcal{B}|}$.
\end{lemma}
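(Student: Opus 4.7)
The key observation is that $\pi(w) \le 0$ is equivalent to $w(\lambda)$ remaining bounded as $\lambda \to \infty$, and likewise $\pi(w) < 0$ is equivalent to $w(\lambda) \to 0$. Consequently $\BW_\pi$ is closed under sums and products, with $\pi(ww') = \pi(w) + \pi(w')$ and $\pi(w + w') \le \max\{\pi(w),\pi(w')\}$ for nonzero $w,w'$. The plan is to prove the lemma in two stages: first show that $M_{\cI\cI} - \lambda I$ is invertible over the field $\BW$ so that $R_{\lambda}(M;\cB)$ is even defined, and then bound the degrees of its entries.

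For invertibility, expand via Leibniz,
$$\det(M_{\cI\cI} - \lambda I) \;=\; \sum_{\sigma \in S_{|\cI|}} \sgn(\sigma) \prod_{i\in\cI} \bigl([M_{\cI\cI}]_{i,\sigma(i)} - \lambda\,\delta_{i,\sigma(i)}\bigr).$$
Because each $[M_{\cI\cI}]_{ii} \in \BW_\pi$, the diagonal factor $[M_{\cI\cI}]_{ii} - \lambda$ has degree exactly $1$, while every off-diagonal entry of $M_{\cI\cI}$ has degree at most $0$. The identity permutation therefore contributes a term of degree $|\cI|$ with leading behavior $(-\lambda)^{|\cI|}$ at infinity, while any other permutation fixes at most $|\cI|-2$ indices and produces a term of degree at most $|\cI|-2$. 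Since only one permutation attains the top degree, no cancellation is possible there, so $\det(M_{\cI\cI} - \lambda I)$ is a nonzero element of $\BW$ of degree exactly $|\cI|$, and $M_{\cI\cI} - \lambda I$ is invertible.

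For the degree bound, use the adjugate formula $(M_{\cI\cI}-\lambda I)^{-1} = \adj(M_{\cI\cI} - \lambda I)/\det(M_{\cI\cI} - \lambda I)$. Applying the same counting argument to the minor obtained by deleting one row and one column gives a degree of at most $|\cI|-1$ when $i=j$ and at most $|\cI|-2$ when $i\ne j$. Dividing by the determinant of degree $|\cI|$, every entry of $(M_{\cI\cI} - \lambda I)^{-1}$ has degree at most $-1$. Since the entries of $M_{\cB\cI}$ and $M_{\cI\cB}$ have degree at most $0$, the sum and product rules for $\pi$ imply that each entry of the triple product $M_{\cB\cI}(M_{\cI\cI} - \lambda I)^{-1} M_{\cI\cB}$ has degree at most $-1$. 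Subtracting from $M_{\cB\cB}$, whose entries have degree at most $0$, leaves entries of $R_{\lambda}(M;\cB)$ of degree at most $0$.

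The only genuine obstacle is the first step: making sure the Leibniz expansion really has a nonzero top coefficient, i.e.\ that the identity permutation is the unique contributor to degree $|\cI|$. This is handled by the elementary observation that a non-identity permutation fixes at most $|\cI|-2$ points, so no cancellation with the leading $(-\lambda)^{|\cI|}$ term is possible. Everything after that is bookkeeping with $\pi$.
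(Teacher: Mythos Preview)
Your proof is correct and follows essentially the same route as the paper: Leibniz expansion of the determinant, the observation that a non-identity permutation fixes at most $|\cI|-2$ indices so the identity term dominates, and then the adjugate formula to bound the degrees of the inverse entrywise. The only cosmetic difference is that the paper carries out the degree count for the full matrix $M-\lambda I$ and then specializes, whereas you work directly with $M_{\cI\cI}-\lambda I$; your version is arguably cleaner.
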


\begin{proof}
Let $M\in\BW^{n\times n}_{\pi}$. The inverse of the matrix $M-\lambda I$ is given by
\begin{equation}\label{eq1}
(M-\lambda I)^{-1}=\frac{1}{\det(M-\lambda I)}\adj(M-\lambda I)
\end{equation}
where $\adj(M-\lambda I)$  is the adjugate matrix of $M-\lambda I$, i.e. the matrix with entries
\begin{equation}
 \adj(M-\lambda I)_{ij} = (-1)^{i+j} \det(\cM_{ji}), ~ 1 \leq i,j \leq n,
\end{equation}
where $\cM_{ij} \in \BW^{(n-1)\times(n-1)}$ is obtained by deleting the $i-$th row and $j-$th column of $M-\lambda I$.

Note that
\begin{equation}
 \det(M-\lambda I)=\sum_{\rho\in \mathcal{P}_n}\Big(\sgn(\rho)\prod_{i=1}^n(M-\lambda I)_{i,\rho(i)}\Big)
 \label{eq:detsum}
\end{equation}
where the sum is taken over the set $\mathcal{P}_n$ of permutations on $N$. The sign $\sgn(\rho)$ of the permutation $\rho\in\mathcal{P}_n$ is  1 (resp. $-1$) if $\rho$ is the composition of an even (resp. odd) number of permutations of two elements.

Using equations (\ref{eq:prod}) and (\ref{eq:lambda}) in \ref{app:degree}, the term in \eqref{eq:detsum} corresponding to the identity permutation $\rho=\id\in\mathcal{P}_n$ has degree $n$ while for $\rho\neq\id$ the other terms have degree strictly smaller than $n$. Equation (\ref{eq:sum}) then implies
\begin{equation}\label{eq:deg1}
\pi\big(\det(M-\lambda I)\big)=n.
\end{equation}
Therefore $\det(M-\lambda I)$ is not identically zero, implying via equation \eqref{eq1} that the inverse $(M-\lambda I)^{-1}$ exists.
Similarly, for $i\in N$ the matrix $\mathcal{M}_{ii}$ is equal to $\widetilde{\mathcal{M}}_{ii}-\lambda I$ for some $\widetilde{\mathcal{M}}\in\mathbb{W}_{\pi}^{(n-1)\times(n-1)}$. Hence,
\begin{equation}\label{eq:deg2}
 \pi\big(\det(\cM_{ii})\big) =n-1, ~ \text{for} ~ i\in N.
\end{equation}

For $i\neq j$ the matrices $\cM_{ij}\in\mathbb{W}^{(n-1)\times(n-1)}$ contain $n-2$ entries of the form $M_{k \ell} - \lambda$ where all other entries of $\cM_{ij}$ belong to the set $\mathbb{W}_{\pi}$. Hence, equations (\ref{eq:prod}) and (\ref{eq:lambda}) imply that for $i\neq j$
\begin{equation}\label{eq:deg3}
\pi\big(\det(\cM_{ij})\big)\leq n-2, ~ \text{for} ~ i,j\in N
\end{equation}
since for $\rho\in\mathcal{P}_{n-1}$ at most $n-2$ terms in the product $\prod_{k=1}^{n-1}(\mathcal{M}_{ij})_{k,\rho(k)}$ have the form $M_{k \ell}-\lambda$.

Given that the degree of $\det(\cM_{ij})$ in (\ref{eq:deg3}) may be zero, equations \eqref{eq:deg1}--\eqref{eq:deg3} together with (\ref{eq:ratio}) imply that $\pi( (M-\lambda I)^{-1}_{ij}) \leq 0$ for all $1\leq i,j\leq n$. Hence, $(M-\lambda I)^{-1} \in \BW_\pi^{n \times n}$. Therefore, if $\mathcal{B}$ and $\mathcal{I}$ form a nonempty partition of $N$ then
$$\left[(M-\lambda I)^{-1}\right]_{\mathcal{II}}\in\mathbb{W}^{|\mathcal{I}|\times|\mathcal{I}|}_{\pi}.$$
Definition \ref{def1} along with equations (\ref{eq:prod}) and (\ref{eq:lambda}) then imply that $R(M;\mathcal{B})$ has entries in $\mathbb{W}_{\pi}$.
\end{proof}

Note that lemma \ref{lemma1} implies the existence of any isospectral reduction $R(M;\mathcal{B})$ if $M\in\mathbb{W}^{n\times n}_{\pi}$ and $\mathcal{B}\subset N$. In particular, any complex valued matrix can be reduced over any index set. Since the matrix $M$ given in (\ref{mat1}) does not belong to  $\mathbb{W}_{\pi}^{2\times 2}$ lemma \ref{lemma1} does not apply in this particular case.

\begin{remark}
Because a matrix $M\in\BW^{n\times n}_{\pi}$ can be reduced over any nonempty index set $\mathcal{B} \subset N$, the isospectral reductions presented here are more general than those given in \cite{Bunimovich:2011:IGR,Bunimovich:2012:IGT,Bunimovich:2012:IC}. In these three papers, for $M$ to be reduced over the index set $\mathcal{B}$ the matrix $M_{\mathcal{II}}$ was required to be similar to an upper triangular matrix. Here, we have no such restriction.
\end{remark}

In the following example we demonstrate how one can use an isospectral reduction to study the dynamics of a mass-spring network in which access is limited.

\begin{example}\label{ex:spring0}
Consider the mass-spring network illustrated in figure~\ref{fig:spring}, with nodes at locations $x_i$, $i=1,2,3,4$ lying on a line and edges representing springs between nodes. For simplicity we assume that all the springs have the same spring constant ($k=1$) and that all the nodes have unit mass. (The precise position of the nodes on the line does not matter for this discussion.)

Suppose each node $x_i$ is subject to a time harmonic displacement $u_i(\omega) e^{j\omega t}$ with frequency $\omega$ in the direction of the line and $j = \sqrt{-1}$. Then the resulting force at node $x_i$ is also time harmonic in the direction of the line and is of the form $f_i(\omega) e^{j \omega t}$. Writing the balance of forces acting on each node with the laws of motion, one can show that the vector of forces $\bbf(\omega) = [ f_1(\omega), \ldots, f_4(\omega)]^T$ is linearly related to the vector of displacements $\bu(\omega) = [ u_1(\omega), \ldots, u_4(\omega)]^T$ by the equation
\begin{equation}
  \bbf(\omega) =  (K - \omega^2 I)\bu(\omega).
\end{equation}
Here the matrix $K$ is the {\em stiffness} matrix
\[
 K =
 \begin{bmatrix}
 1 & -1\\
 -1 & 2 & -1 \\
 & -1 & 2 & -1\\
 && -1 & 1
 \end{bmatrix}.
\]
If we let $\lambda \equiv \omega^2$, we see that the eigenmodes of the stiffness matrix $K$ correspond to non-zero displacements that do not generate forces. For instance, the eigenmode corresponding to the zero frequency is $\bu = [1,1,1,1]^T$, i.e. by displacing all nodes by the same amount, there are no net forces at the nodes.

Suppose we only have access to certain terminal (or boundary) nodes of this network, say $\cB = \{1,4\}$. Then we can write the equilibrium of forces at the interior nodes $\cI = \{2,3\}$ and conclude that the net forces $\bbf_\cB$ at the terminal nodes depend linearly on the displacements $\bu_\cB$ at the terminal nodes according to the equation
\begin{equation}
 \bbf_\cB(\omega) = ( R_{\omega^2} ( K; \cB) - \omega^2 I) \bu_\cB(\omega).
\end{equation}

The spectrum and inverse spectrum of the response are
$$\sigma(R_{\omega^2} ( K; \cB))=\{2\pm\sqrt{2},2,0\} \text{and} \sigma^{-1}(R_{\omega^2} ( K; \cB))=\{3,1\}.$$ The eigenvalues of $R_{\omega^2} ( K; \cB)$ correspond to frequencies for which there is a displacement of the boundary nodes $\cB$ that generate no forces at these nodes. Conversely, the resonances (or inverse eigenvalues) of $R_{\omega^2} ( K; \cB)$ correspond to frequencies at which there is a displacement of the boundary nodes for which the resulting forces are infinitely large.
\end{example}

\begin{figure}
 \begin{center}
 \includegraphics[width=8cm]{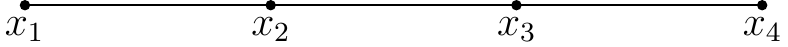}
 \end{center}
 \caption{The mass spring network of example~\ref{ex:spring0} with boundary nodes $\mathcal{B}=\{1,4\}$ and interior nodes $\mathcal{I}=\{2,3\}$.}
 \label{fig:spring}
\end{figure}

\subsection{Sequential Reductions}
In the previous section we observed that the isospectral reduction $R(M;\mathcal{B})$ of $M\in\BW^{n\times n}_{\pi}$ is again a matrix in $\BW^{m\times m}_{\pi}$. It is therefore possible to reduce the matrix $R(M;\mathcal{B})$ again over some subset of $\mathcal{B}$. That is, we may sequentially reduce the matrix $M$. However, a natural question is to what extent does a sequentially reduced matrix depends on the particular sequence of index sets over which it has been reduced.

As it turns out, if a matrix has been reduced over the index set $\mathcal{B}_1$ then $\mathcal{B}_2$ up to the index set $\mathcal{B}_m$ then the resulting matrix depends only on the index set $\mathcal{B}_m$. To formalize this, let $M\in\mathbb{W}^{n\times n}_{\pi}$ and suppose there are non-empty sets $\mathcal{B}_1,\dots,\mathcal{B}_m$ such that $N\supset\mathcal{B}_1\supset,\dots,\supset\mathcal{B}_m$. Then $M$ can be \emph{sequentially reduced} over the sets $\mathcal{B}_1,\dots,\mathcal{B}_m$ where we write
$$R_{\lambda}(M;\mathcal{B}_1,\dots,\mathcal{B}_m)=
R_{\lambda}\big(\dots R_{\lambda}(R_{\lambda}(M;\mathcal{B}_1);\mathcal{B}_2)\dots;\mathcal{B}_m\big).$$
If $M$ is sequentially reduced over the index sets $\mathcal{B}_1,\dots,\mathcal{B}_m$ we call $\mathcal{B}_m$ the \emph{final index set} of this sequence of reductions.

\begin{theorem}\label{theorem2}\textbf{(Uniqueness of Sequential Reductions)}
For $M(\lambda)\in\mathbb{W}^{n\times n}_{\pi}$ suppose $N\supset\mathcal{B}_1\supset,\dots,\supset\mathcal{B}_m$ where $\mathcal{B}_m$ is non-empty. Then
$$R_{\lambda}(M;\mathcal{B}_1,\dots,\mathcal{B}_m)=R_{\lambda}(M;\mathcal{B}_m).$$
\end{theorem}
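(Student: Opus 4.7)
The plan is to reduce the statement to the two-step case $R_\lambda(R_\lambda(M;\mathcal{B}_1);\mathcal{B}_2)=R_\lambda(M;\mathcal{B}_2)$ (whenever $\mathcal{B}_2\subset\mathcal{B}_1\subset N$) by a straightforward induction on the length $m$ of the chain, and then to prove this two-step identity via the Crabtree--Haynsworth quotient formula for Schur complements combined with identity (\ref{eq:sch}). By lemma \ref{lemma1}, every reduction appearing below exists, since $\mathbb{W}_\pi^{n\times n}$ is closed under isospectral reduction, so all intermediate Schur complements are well defined.

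First I would set up the relevant partition. Writing $\mathcal{I}_1=N\setminus\mathcal{B}_1$ and $\mathcal{J}=\mathcal{B}_1\setminus\mathcal{B}_2$, the index set $N$ decomposes into the three disjoint blocks $\mathcal{I}_1$, $\mathcal{J}$, and $\mathcal{B}_2$, with $\mathcal{I}_1\cup\mathcal{J}=N\setminus\mathcal{B}_2$. Put $A=M-\lambda I$ and $E=A_{(\mathcal{I}_1\cup\mathcal{J})(\mathcal{I}_1\cup\mathcal{J})}$. Using (\ref{eq:sch}), the first reduction gives
\[
R_\lambda(M;\mathcal{B}_1)-\lambda I = A/A_{\mathcal{I}_1\mathcal{I}_1}.
\]
Taking the principal submatrix indexed by $\mathcal{J}$ on both sides and using the standard identity $(A/A_{\mathcal{I}_1\mathcal{I}_1})_{\mathcal{J}\mathcal{J}}=E/A_{\mathcal{I}_1\mathcal{I}_1}$ (which is immediate from block multiplication), a second application of (\ref{eq:sch}) yields
\[
R_\lambda\bigl(R_\lambda(M;\mathcal{B}_1);\mathcal{B}_2\bigr)-\lambda I=\bigl(A/A_{\mathcal{I}_1\mathcal{I}_1}\bigr)\big/\bigl(E/A_{\mathcal{I}_1\mathcal{I}_1}\bigr).
\]

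Now I would invoke the Crabtree--Haynsworth quotient formula, which says that for a block-partitioned matrix, the iterated Schur complement equals the Schur complement with respect to the merged block: $(A/A_{\mathcal{I}_1\mathcal{I}_1})/(E/A_{\mathcal{I}_1\mathcal{I}_1})=A/E$. Substituting back gives
\[
R_\lambda\bigl(R_\lambda(M;\mathcal{B}_1);\mathcal{B}_2\bigr)-\lambda I = A/E=(M-\lambda I)\big/(M_{(N\setminus\mathcal{B}_2)(N\setminus\mathcal{B}_2)}-\lambda I),
\]
which, by (\ref{eq:sch}) again, is precisely $R_\lambda(M;\mathcal{B}_2)-\lambda I$. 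This proves the two-step case.

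Finally, induction on $m$ finishes the theorem: the case $m=1$ is tautological, and assuming the result for chains of length $m-1$, we have
\[
R_\lambda(M;\mathcal{B}_1,\dots,\mathcal{B}_m)=R_\lambda\bigl(R_\lambda(M;\mathcal{B}_1,\dots,\mathcal{B}_{m-1});\mathcal{B}_m\bigr)=R_\lambda\bigl(R_\lambda(M;\mathcal{B}_{m-1});\mathcal{B}_m\bigr),
\]
and the two-step case collapses this to $R_\lambda(M;\mathcal{B}_m)$. The main obstacle, such as it is, lies in justifying the quotient formula in our setting: the classical derivation uses block-$LDU$ factorization, and we need to know that $A_{\mathcal{I}_1\mathcal{I}_1}$, $E$, and $E/A_{\mathcal{I}_1\mathcal{I}_1}$ are all invertible over the field $\mathbb{W}$. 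Lemma \ref{lemma1}, together with the fact that any principal submatrix of a matrix in $\mathbb{W}_\pi^{n\times n}$ again lies in that class, ensures that each of $M_{\mathcal{II}}-\lambda I$ appearing in every intermediate step is invertible, and then the factorization identity can be verified by direct block-matrix multiplication over $\mathbb{W}$.
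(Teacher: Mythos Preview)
Your proof is correct and follows the same overall architecture as the paper: both reduce the general statement to the two-step case $R_\lambda(R_\lambda(M;\mathcal{B}_1);\mathcal{B}_2)=R_\lambda(M;\mathcal{B}_2)$ and then close with exactly the induction you wrote. The only difference is in how the two-step case is handled. You rewrite each reduction as a Schur complement via (\ref{eq:sch}) and then invoke the Crabtree--Haynsworth quotient formula $(A/A_{\mathcal{I}_1\mathcal{I}_1})/(E/A_{\mathcal{I}_1\mathcal{I}_1})=A/E$ as a black box; the paper instead works directly from the definition (\ref{eq:isred}), expands the $2\times2$ block inverse in (\ref{eq:smb}) using the formula (\ref{eq:2x2inv}), and verifies by hand that the result agrees with (\ref{eq:ssmbib}). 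In effect the paper re-derives the quotient formula in this specific setting, while you recognise it and cite it. Your route is more concise and makes the Schur-complement structure explicit; the paper's route is fully self-contained. The invertibility justifications you flag at the end are exactly what is needed, and lemma~\ref{lemma1} does supply them, as you say.
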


That is, in a sequence of reductions the resulting matrix is completely specified by the final index set. To prove theorem \ref{theorem2} we first require the following lemma.

\begin{lemma}\label{lemma2}
Let the non-empty sets $\mathcal{B}$, $\mathcal{I}$, and $\mathcal{J}$ partition $N$. If $M(\lambda) \in\BW^{n\times n}_{\pi}$ then $R_{\lambda}(M;\mathcal{B}\cup\mathcal{I},\mathcal{B})=R_{\lambda}(M;\mathcal{B})$.
\end{lemma}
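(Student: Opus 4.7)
The plan is to convert the statement into an identity about Schur complements of $A := M-\lambda I$ with respect to the three-way partition $\mathcal{B}\cup\mathcal{I}\cup\mathcal{J}=N$, and then invoke the Crabtree--Haynsworth quotient formula. Using equation (\ref{eq:sch}), the claim
$R_{\lambda}(M;\mathcal{B}\cup\mathcal{I},\mathcal{B})=R_{\lambda}(M;\mathcal{B})$
is equivalent to
$\bigl(A/A_{\mathcal{JJ}}\bigr)\big/\bigl(A/A_{\mathcal{JJ}}\bigr)_{\mathcal{II}} = A/A_{(\mathcal{I}\cup\mathcal{J})(\mathcal{I}\cup\mathcal{J})}$,
once we know both sides are defined.

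Before computing, I would take care of all invertibility hypotheses. Because $M\in\BW^{n\times n}_{\pi}$, each principal submatrix of $M$ is again in $\BW_{\pi}$; applying the degree argument inside the proof of Lemma \ref{lemma1} to the principal submatrices indexed by $\mathcal{J}$, by $\mathcal{I}\cup\mathcal{J}$, and by $\mathcal{I}$ shows that $M_{\mathcal{JJ}}-\lambda I$, $M_{(\mathcal{I}\cup\mathcal{J})(\mathcal{I}\cup\mathcal{J})}-\lambda I$, and $(R_{\lambda}(M;\mathcal{B}\cup\mathcal{I}))_{\mathcal{II}}-\lambda I$ are all invertible; moreover, Lemma \ref{lemma1} says $R_{\lambda}(M;\mathcal{B}\cup\mathcal{I})$ itself lives in $\BW_{\pi}$, which is what legalises the second reduction. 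Thus every Schur complement that appears below exists.

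The main computation is then the quotient identity. I would partition $A$ into a $3\times 3$ block matrix with row/column blocks indexed by $\mathcal{B},\mathcal{I},\mathcal{J}$ in that order, set $B:=A/A_{\mathcal{JJ}}$ and $C:=A_{(\mathcal{I}\cup\mathcal{J})(\mathcal{I}\cup\mathcal{J})}$, and observe by direct substitution that
\[
 B_{\mathcal{II}} \;=\; A_{\mathcal{II}}-A_{\mathcal{IJ}} A_{\mathcal{JJ}}^{-1} A_{\mathcal{JI}} \;=\; C/A_{\mathcal{JJ}}.
\]
It remains to verify the Crabtree--Haynsworth identity $B/B_{\mathcal{II}}=A/C$. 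I would obtain $C^{-1}$ from its block $LDU$ factorisation (which is legal because both $A_{\mathcal{JJ}}$ and $C/A_{\mathcal{JJ}}=B_{\mathcal{II}}$ are invertible), substitute it into
$A_{\mathcal{BB}} - \bigl[A_{\mathcal{BI}}\ A_{\mathcal{BJ}}\bigr] C^{-1}\bigl[A_{\mathcal{IB}}\ ;\ A_{\mathcal{JB}}\bigr]^{T}$,
and after grouping the six resulting terms recognise the right-hand side as
$B_{\mathcal{BB}}-B_{\mathcal{BI}} B_{\mathcal{II}}^{-1} B_{\mathcal{IB}}$, where $B_{\mathcal{BB}},B_{\mathcal{BI}},B_{\mathcal{IB}}$ are read off from the earlier expression for $B=A/A_{\mathcal{JJ}}$. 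Adding $\lambda I$ back and re-applying (\ref{eq:sch}) converts the Schur-complement identity into the stated equality of isospectral reductions.

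The only step that is not bookkeeping is the quotient identity in the last paragraph, but this is a standard block-matrix manipulation and I do not expect it to pose a real obstacle; the conceptual content of the lemma is entirely carried by the observation $B_{\mathcal{II}}=C/A_{\mathcal{JJ}}$, which makes Crabtree--Haynsworth directly applicable.
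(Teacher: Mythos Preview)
Your proposal is correct and, at the level of the underlying computation, coincides with the paper's proof: both amount to verifying the Schur-complement quotient identity (Crabtree--Haynsworth) for the $3\times 3$ block partition of $M-\lambda I$, and both do so by expanding the inverse of the $(\mathcal{I}\cup\mathcal{J})$-block via its $2\times 2$ block structure.

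The presentation differs in a useful way, however. The paper works directly with the blocks of $M$, writes out $R_{\lambda}(M;\mathcal{B})$ and $R_{\lambda}(M;\mathcal{B}\cup\mathcal{I},\mathcal{B})$ explicitly, and then applies the $2\times 2$ block-inverse formula \eqref{eq:2x2inv} to the former until it matches the latter. You instead pass through \eqref{eq:sch} at the outset to strip away the $\lambda I$ shifts, reducing the statement to a pure Schur-complement identity for $A=M-\lambda I$; recognising this as Crabtree--Haynsworth lets you isolate the one nontrivial observation, namely $B_{\mathcal{II}}=C/A_{\mathcal{JJ}}$, before carrying out the (identical) block algebra. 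Your framing is cleaner and makes the invertibility bookkeeping more transparent, but neither argument buys anything the other does not. One small notational slip: in your displayed expression for $A/C$ the column vector $\bigl[A_{\mathcal{IB}};A_{\mathcal{JB}}\bigr]$ should not carry a transpose.
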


\begin{proof}
Assume without loss of generality that $M \in \BW^{n\times n}_{\pi}$ can be written as
\[
 M(\lambda)
 =
 \begin{bmatrix}
  M_{\mathcal{BB}} & M_{\mathcal{BI}} & M_{\mathcal{BJ}}\\
  M_{\mathcal{IB}} & M_{\mathcal{II}} & M_{\mathcal{IJ}}\\
  M_{\mathcal{JB}} & M_{\mathcal{JI}} & M_{\mathcal{JJ}}
 \end{bmatrix}.
\]
Using the definition of isospectral reduction we have
\begin{equation}\label{eq:smb}
 R_{\lambda}(M;\mathcal{B})
 = M_{\mathcal{BB}}
 -
 \begin{bmatrix} M_{\mathcal{BI}} & M_{\mathcal{BJ}} \end{bmatrix}
 \begin{bmatrix}
  M_{\mathcal{II}} - \lambda I &  M_{\mathcal{IJ}}\\
  M_{\mathcal{JI}} & M_{\mathcal{JJ}} - \lambda I
 \end{bmatrix}^{-1}
 \begin{bmatrix} M_{\mathcal{IB}} \\ M_{\mathcal{JB}} \end{bmatrix} \ \ \text{and}
\end{equation}

\begin{equation}\label{eq:smbi}
 R_{\lambda}(M;\mathcal{B}\cup\mathcal{I})
 =
 \begin{bmatrix}
  M_{\mathcal{BB}} & M_{\mathcal{BI}} \\
  M_{\mathcal{IB}} & M_{\mathcal{II}}
 \end{bmatrix}
 -
 \begin{bmatrix} M_{\mathcal{BJ}} \\ M_{\mathcal{IJ}} \end{bmatrix}
 (M_{\mathcal{JJ}} -  \lambda I)^{-1}
 \begin{bmatrix} M_{\mathcal{JB}} & M_{\mathcal{JI}} \end{bmatrix}.
\end{equation}
Taking the isospectral reduction of $R_{\lambda}(M;\mathcal{B}\cup\mathcal{I})$ over $\mathcal{B}$ in \eqref{eq:smbi} we have
\begin{multline}\label{eq:ssmbib}
 R_{\lambda}(M;\mathcal{B}\cup\mathcal{I},\mathcal{B}) =
 M_{\mathcal{BB}} - M_{\mathcal{BJ}} K(\lambda)^{-1} M_{\mathcal{JB}}\\
 -\left[ (M_{\mathcal{BI}} - M_{\mathcal{BJ}} K(\lambda)^{-1} M_{\mathcal{JI}})
        T(\lambda)^{-1}
        (M_{\mathcal{IB}} - M_{\mathcal{IJ}} K(\lambda)^{-1} M_{\mathcal{JB}}) \right],
\end{multline}
where $K(\lambda) \equiv M_{\mathcal{JJ}} - \lambda I$ and $T(\lambda) \equiv M_{\mathcal{II}} - \lambda I - M_{\mathcal{IJ}} K(\lambda)^{-1} M_{\mathcal{JI}}$. Note that both $K(\lambda)^{-1}$ and $T(\lambda)^{-1}$ exist following the proof of lemma \ref{lemma1}. To show the desired result we need to verify that expressions \eqref{eq:smb} and \eqref{eq:ssmbib} are equal.

Recall the following identity for the inverse of a square matrix $M$ with $2\times 2$ blocks:
\begin{equation}\label{eq:2x2inv}
 M^{-1} =
 \begin{bmatrix}
 A & B\\
 C & D
 \end{bmatrix}^{-1}
 =
 \begin{bmatrix}
  E^{-1} & -E^{-1} B D^{-1}\\
  -D^{-1} C E^{-1} & D^{-1} + D^{-1} C E^{-1} B D^{-1}
 \end{bmatrix},
\end{equation}
where $E = A - B D^{-1} C$ is the Schur complement of $D$ in $M$. The determinantal identity \eqref{eq:detschur} implies  that $M$ is invertible if and only if $D$ and $E$ are invertible. Using \eqref{eq:2x2inv} to find the inverse of the $2 \times 2$ block matrix appearing in \eqref{eq:smb} we get
\begin{equation} \label{eq:IJblock}
 \begin{bmatrix}
  M_{\mathcal{II}} - \lambda I &  M_{\mathcal{IJ}}\\
  M_{\mathcal{JI}} & M_{\mathcal{JJ}} - \lambda I
 \end{bmatrix}^{-1}=
\end{equation}
\begin{equation*}
 \begin{bmatrix}
  T(\lambda)^{-1} & - T(\lambda)^{-1} M_{\mathcal{IJ}} K(\lambda)^{-1}\\
  - K(\lambda)^{-1} M_{\mathcal{JI}} T(\lambda)^{-1} &
 K(\lambda)^{-1} + K(\lambda)^{-1} M_{\mathcal{JI}} T(\lambda)^{-1} M_{\mathcal{IJ}} K(\lambda)^{-1}
 \end{bmatrix}.
\end{equation*}
Using \eqref{eq:IJblock} in \eqref{eq:smb} we get \eqref{eq:ssmbib} completing the proof.
\end{proof}

We now give a proof of theorem \ref{theorem2}.

\begin{proof}
For $M\in\mathbb{W}^{n\times n}_{\pi}$ suppose $N\subset\mathcal{B}_1\supset\dots\supset\mathcal{B}_m$ where $\mathcal{B}_m\neq\emptyset.$ If $m=2$ then lemma \ref{lemma2} directly implies that $R_{\lambda}(M;\mathcal{B}_1,\mathcal{B}_2)=R_{\lambda}(M;\mathcal{B}_2)$. For $2\leq k<m$ suppose $R_{\lambda}(M;\mathcal{B}_1,\dots,\mathcal{B}_k)=R_{\lambda}(M;\mathcal{B}_k)$. Then
$$R_{\lambda}(M;\mathcal{B}_1,\dots,\mathcal{B}_k,\mathcal{B}_{k+1})=R_{\lambda}(M;\mathcal{B}_k,\mathcal{B}_{k+1}) =R_{\lambda}(M;\mathcal{B}_{k+1})$$
where the second equality follows from lemma \ref{lemma2}. By induction it then follows that $R_{\lambda}(M;\mathcal{B}_1,\dots,\mathcal{B}_m)=R_{\lambda}(M;\mathcal{B}_m)$.
\end{proof}

\begin{example}
Let $M\in\complex^{4\times 4}$ be the matrix given by
$$M=\left[\begin{array}{cccc}
1&0&1&0\\
0&1&0&1\\
0&1&1&1\\
1&0&1&1
\end{array}\right]$$
and let $\mathcal{B}=\{1,2\}$. Our goal in this example is to illustrate that $$R_{\lambda}(M;\mathcal{B}) = R_{\lambda}(M;\mathcal{B}\cup \{3\},\mathcal{B})= R_{\lambda}(M;\mathcal{B}\cup \{4\},\mathcal{B}).$$ As one can compute
$$R_{\lambda}(M;\mathcal{B}\cup \{3\})=
\begin{bmatrix}
1&0&1\\
\frac{1}{\lambda-1}&1&\frac{1}{\lambda-1}\\
\frac{1}{\lambda-1}&1&\frac{\lambda}{\lambda-1}
\end{bmatrix} \ \ \text{and} \ \
R_{\lambda}(M;\mathcal{B}\cup \{4\})=\left[\begin{array}{ccc}
1&\frac{1}{\lambda-1}&\frac{1}{\lambda-1}\\
0&1&1\\
1&\frac{1}{\lambda-1}&\frac{\lambda}{\lambda-1}
\end{array}\right].$$
Although $R_{\lambda}(M;\mathcal{B}\cup \{3\})\neq R_{\lambda}(M;\mathcal{B}\cup \{4\})$, note that by reducing both of these matrices over $\mathcal{B}=\{1,2\}$ one has $$R_{\lambda}(M;B) = R_{\lambda}(M;\mathcal{B}\cup \{3\},\mathcal{B})= R_{\lambda}(M;\mathcal{B}\cup \{4\},\mathcal{B})=
\begin{bmatrix}
\frac{\lambda^2-2\lambda+1}{\lambda^2-2\lambda}&\frac{\lambda-1}{\lambda^2-2\lambda}\\
\frac{\lambda-1}{\lambda^2-2\lambda}&\frac{\lambda^2-2\lambda+1}{\lambda^2-2\lambda}
\end{bmatrix}.$$
As a final observation, we note that $\sigma(M)=\{\frac{1}{2}(3\pm\sqrt{5}),\frac{1}{2}(1\pm\sqrt{-3})\}$ and $\sigma(M_{II})=\{0,2\}$ for $I = \{3,4\}$. Hence the matrix $M$ and the reduced matrix $R(M,\mathcal{B})$ have the same eigenvalues by corollary~\ref{cor1}. That is, an isospectral reduction need not have any effect on the spectrum of a matrix. (In this example the inverse spectrum does change with the reduction).
\end{example}

\subsection{Spectral Inverse} \label{sec:spinv}
Although a matrix $M\in\mathbb{W}(\lambda)^{n\times n}$ has both a spectrum and an inverse spectrum, the techniques that have been developed to analyze its spectral properties have been restricted to its spectrum \cite{Bunimovich:2011:IGR,Bunimovich:2012:IGT,Bunimovich:2012:IC}. The goal in this section is to introduce a new matrix transformation that exchanges a matrix' spectrum and inverse spectrum. This transformation allows us to investigate the inverse spectrum of these matrices with tools meant to study its spectrum. Additionally, we use this transformation to define the inverse pseudospectrum (or pseudoresonances) of a matrix from the pseudospectrum of a matrix (Section \ref{sec:psi}).

\begin{definition}
For $M(\lambda)\in\mathbb{W}^{n\times n}$ let $S_{\lambda}(M)\in\mathbb{W}^{n\times n}$ be the matrix $$S_{\lambda}(M)=(M(\lambda)-\lambda I)^{-1}+\lambda I\in\mathbb{W}^{n\times n},$$ if the inverse $(M(\lambda)-\lambda I)^{-1}$ exists. The matrix $S_{\lambda}(M)$ is called the \emph{spectral inverse} of the matrix $M(\lambda)$.
\end{definition}

We typically write the spectral inverse of $M\in\mathbb{W}^{n\times n}$ as $S(M)$ unless otherwise needed. We also observe that not every matrix $M\in\BW^{n\times n}$ has a spectral inverse. For instance, the matrix
$$M=\left[
\begin{array}{cc}
\lambda&0\\
0&\lambda
\end{array}
\right]$$
cannot be spectrally inverted. However, if $M$ has a spectral inverse then the following holds.

\begin{theorem}\label{theorem3}
Suppose $M(\lambda)\in\BW^{n\times n}$ has a spectral inverse $S(M)$. Then
$$\sigma\big(S(M)\big)=\sigma^{-1}(M) \ \ \text{and} \ \ \sigma^{-1}\big(S(M)\big)=\sigma(M).$$
\end{theorem}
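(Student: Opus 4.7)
The plan is to reduce the theorem to a short determinant computation that inverts the characteristic rational function of $M$. The key identity is that, by the very definition of the spectral inverse,
\[
 S(M) - \lambda I = (M(\lambda) - \lambda I)^{-1}.
\]
Taking determinants on both sides and using multiplicativity of the determinant gives
\[
 \det\bigl(S(M) - \lambda I\bigr) = \frac{1}{\det(M(\lambda) - \lambda I)}.
\]
This is the single computation that drives the whole argument.

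From here I would write $\det(M(\lambda) - \lambda I) = p(\lambda)/q(\lambda)$ with $p,q \in \complex[\lambda]$ having no common linear factors, as guaranteed by the definition of $\BW$. Then
\[
 \det\bigl(S(M) - \lambda I\bigr) = \frac{q(\lambda)}{p(\lambda)},
\]
which is again in reduced form (no common linear factors), since the original quotient was. Applying Definition \ref{def1.1} to $M$ gives $\sigma(M) = \{\lambda : p(\lambda) = 0\}$ and $\sigma^{-1}(M) = \{\lambda : q(\lambda) = 0\}$, while applying it to $S(M)$ gives $\sigma(S(M)) = \{\lambda : q(\lambda) = 0\}$ and $\sigma^{-1}(S(M)) = \{\lambda : p(\lambda) = 0\}$. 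The two pairs of equalities are precisely the claimed statement, and multiplicities are preserved because passing from $p/q$ to $q/p$ preserves the exponents attached to each root.

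The only mild subtlety, which I would address briefly, is well-definedness: one must verify that $\det(M - \lambda I)$ is not the zero element of $\BW$, so that its reciprocal makes sense and represents $\det(S(M) - \lambda I)$. This follows from the hypothesis that $S(M)$ exists, which requires $(M-\lambda I)^{-1}$ to exist as a matrix over $\BW$, and this in turn forces $\det(M-\lambda I)$ to be a nonzero element of the field $\BW$. So I do not expect any genuine obstacle; the whole proof is essentially the observation that forming the spectral inverse inverts the rational function $\det(M-\lambda I)$, and inversion in $\BW$ swaps numerator and denominator, hence swaps spectrum and inverse spectrum.
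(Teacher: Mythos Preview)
Your proposal is correct and follows essentially the same approach as the paper: both derive $\det(S(M)-\lambda I) = \det(M-\lambda I)^{-1}$ from multiplicativity of the determinant and the defining relation $S(M)-\lambda I = (M-\lambda I)^{-1}$, then read off the swapped spectrum and inverse spectrum. Your write-up is in fact slightly more detailed than the paper's, spelling out the $p/q$ bookkeeping and the well-definedness point that the paper leaves implicit.
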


\begin{proof}
Let $M(\lambda)\in\BW^{n\times n}$ with spectral inverse $S(M)$. Note that
$$\det\big((S(M)-\lambda I)(M-\lambda I)\big)=\det\big((M-\lambda I)^{-1}(M-\lambda I)\big)=\det(I)=1.$$
As the determinant is multiplicative then
$$\det(S(M)-\lambda I)=\det(M-\lambda I)^{-1},$$
and the result follows.
\end{proof}

A matrix $M\in\mathbb{W}^{n\times n}$ may or may not have a spectral inverse. However, if $M\in\mathbb{W}^{n\times n}_{\pi}$ then the proof of lemma \ref{lemma1} implies that $M-\lambda I$ is invertible. Therefore, $S(M)$ exists. This result is stated in the following lemma.

\begin{lemma}\label{lemma3}
If $M(\lambda)\in\mathbb{W}^{n\times n}_{\pi}$, then $M(\lambda)$ has a spectral inverse.
\end{lemma}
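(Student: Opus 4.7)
The plan is to reduce this entirely to the computation already performed inside the proof of Lemma \ref{lemma1}. Recall that the spectral inverse $S_\lambda(M) = (M(\lambda) - \lambda I)^{-1} + \lambda I$ is defined precisely when the matrix $M - \lambda I$ is invertible over the field $\mathbb{W}$, i.e., when $\det(M - \lambda I)$ is not the zero element of $\mathbb{W}$. So the whole task is to verify this nonvanishing condition for $M \in \mathbb{W}_\pi^{n \times n}$.

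First I would recall equation \eqref{eq:deg1} from the proof of Lemma \ref{lemma1}, which states that
\[
\pi\bigl(\det(M - \lambda I)\bigr) = n
\]
for any $M \in \mathbb{W}_\pi^{n \times n}$. This is derived there by expanding the determinant as a signed sum over permutations: the identity permutation contributes a product $\prod_{i=1}^n (M_{ii} - \lambda)$ of degree exactly $n$, while every other permutation contributes a term of strictly smaller degree, so the leading term cannot cancel.

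From $\pi(\det(M-\lambda I)) = n > 0$ it follows in particular that $\det(M - \lambda I)$ is not identically zero as a rational function of $\lambda$, hence it is a nonzero element of the field $\mathbb{W}$. Consequently $(M - \lambda I)^{-1}$ exists as a matrix in $\mathbb{W}^{n \times n}$ (its entries are given explicitly by the adjugate formula \eqref{eq1}, which is well-defined once the determinant is a unit of $\mathbb{W}$). Therefore
\[
S_\lambda(M) = (M(\lambda) - \lambda I)^{-1} + \lambda I \in \mathbb{W}^{n \times n},
\]
which is the desired conclusion.

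There is no real obstacle here; the lemma is essentially a bookkeeping consequence of the degree estimate already carried out in Lemma \ref{lemma1}. The only subtlety worth flagging is that $\pi(\det(M-\lambda I)) = n$ is a statement about the degree of a specific representative $p/q$ of the rational function after canceling common linear factors, and one must observe that a rational function of nonzero degree cannot be the zero element of $\mathbb{W}$ --- which is immediate from the convention that $\pi(0) = 0$ adopted after Example \ref{ex:0}.
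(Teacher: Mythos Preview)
Your proposal is correct and follows exactly the same route as the paper: both simply invoke the degree computation \eqref{eq:deg1} from the proof of Lemma~\ref{lemma1} to conclude that $\det(M-\lambda I)$ is a nonzero element of $\mathbb{W}$, whence $(M-\lambda I)^{-1}$ and hence $S(M)$ exist. Your write-up is somewhat more explicit about why nonzero degree forces the rational function to be nonzero, but the argument is identical in substance.
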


\begin{example}\label{ex:5}
Let $M\in\BW^{4\times 4}_{\pi}$ be the matrix given by
$$M=\left[
\begin{array}{cccc}
\frac{1}{\lambda}&\frac{1}{\lambda}&0&0\\
0&\frac{1}{\lambda}&1&0\\
0&0&\frac{1}{\lambda}&0\\
0&0&0&\frac{1}{\lambda}
\end{array}
\right]$$
for which we have
$$\det\big(M(\lambda)-\lambda I\big)=\frac{\lambda^8-4\lambda^6+6\lambda^4-4\lambda^2+1}{\lambda^4}.$$
As one can calculate, the spectral inverse $S(M)$ is the matrix
$$S(M)=\left[
\begin{array}{cccc}
\frac{-\lambda}{\lambda^2-1}&\frac{-\lambda}{(\lambda^2-1)^2}&\frac{-\lambda^2}{(\lambda^2-1)^3}&\frac{-\lambda^3}{(\lambda^2-1)^4}\\
0&\frac{-\lambda}{\lambda^2-1}&\frac{-\lambda^2}{(\lambda^2-1)^2}&\frac{-\lambda^3}{(\lambda^2-1)^3}\\
0&0&\frac{-\lambda}{\lambda^2-1}&\frac{-\lambda^2}{(\lambda^2-1)^2}\\
0&0&0&\frac{-\lambda}{\lambda^2-1}
\end{array}
\right]+\lambda I.$$
Taking the determinant of $S(M)-\lambda I$ one has
$$\det\big(S(M)-\lambda I\big)=\frac{\lambda^4}{\lambda^8-4\lambda^6+6\lambda^4-4\lambda^2+1}.$$
That is, $\det\big(S(M)-\lambda I\big)=\det(M(\lambda)-\lambda I)^{-1}$.
\end{example}

Observe, that for any $M\in\mathbb{W}_{\pi}^{n\times n}$ the spectral inverse $S(M)\notin\mathbb{W}_{\pi}^{n\times n}$. Therefore, we have no guarantee that $S(M)$ can be isospectrally reduced. However, the following holds.

\begin{theorem}\label{theorem4}\textbf{(Reductions of the Spectral Inverse)}
For $M(\lambda)\in\mathbb{W}^{n\times n}_{\pi}$ suppose $N\supset\mathcal{B}_1\supset,\dots,\supset\mathcal{B}_m$ where $\mathcal{B}_m$ is non-empty. Then
\begin{enumerate}[(i)]
\item $R_{\lambda}\big(S(M);\mathcal{B}_m\big)$ exists;
\item $R_{\lambda}(S(M);\mathcal{B}_1,\dots,\mathcal{B}_m)=R_{\lambda}(S(M);\mathcal{B}_m)$; and
\item $R_{\lambda}(S(M);\mathcal{B})=(M-\lambda I)^{-1}/\left[(M-\lambda I)^{-1}\right]_{\mathcal{II}}+\lambda I$ where $\mathcal{I}=N-\mathcal{B}_m$.
\end{enumerate}
\end{theorem}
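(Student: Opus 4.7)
The plan is to unpack the definition of the isospectral reduction of $S(M)$ and identify a Schur complement. Writing $A := (M-\lambda I)^{-1}$, so that $S(M)=A+\lambda I$, the block decomposition of $S(M)$ with respect to any partition $\mathcal{B}, \mathcal{I}$ of $N$ reads $S(M)_{\mathcal{BB}}=A_{\mathcal{BB}}+\lambda I$, $S(M)_{\mathcal{BI}}=A_{\mathcal{BI}}$, $S(M)_{\mathcal{IB}}=A_{\mathcal{IB}}$, and $S(M)_{\mathcal{II}}-\lambda I = A_{\mathcal{II}}$. Substituting these into Definition~\ref{def1} gives
\[
R_{\lambda}(S(M);\mathcal{B}) = A_{\mathcal{BB}} + \lambda I - A_{\mathcal{BI}} A_{\mathcal{II}}^{-1} A_{\mathcal{IB}} = (A/A_{\mathcal{II}}) + \lambda I,
\]
which is exactly the formula in (iii). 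So (iii) becomes a bookkeeping exercise, provided we verify that $A_{\mathcal{II}}$ is invertible.

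For (i) this invertibility is precisely the issue. I would establish it via the standard block-inversion identity $(N^{-1})_{\mathcal{II}}=(N/N_{\mathcal{BB}})^{-1}$ applied to $N=M-\lambda I$ with $\mathcal{B}=\mathcal{B}_m$. The hypothesis $M\in\BW_{\pi}^{n\times n}$ entails $M_{\mathcal{BB}}\in\BW_{\pi}^{|\mathcal{B}|\times|\mathcal{B}|}$, so the proof of Lemma~\ref{lemma1} yields the invertibility of both $N$ and $N_{\mathcal{BB}}=M_{\mathcal{BB}}-\lambda I$. Consequently $A_{\mathcal{II}}$ is the inverse of the Schur complement $N/N_{\mathcal{BB}}$ and is itself invertible, proving (i). Applying the same identity in the other direction, i.e. to $A$ in place of $N$, gives $A/A_{\mathcal{II}} = \bigl((A^{-1})_{\mathcal{BB}}\bigr)^{-1} = (M_{\mathcal{BB}}-\lambda I)^{-1}$, and combining with the previous paragraph yields the useful closed form
\[
R_{\lambda}(S(M);\mathcal{B}) = (M_{\mathcal{BB}}-\lambda I)^{-1} + \lambda I = S(M_{\mathcal{BB}}).
\]

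Part (ii) is then immediate by induction. The base case is $R_{\lambda}(S(M);\mathcal{B}_1)=S(M_{\mathcal{B}_1\mathcal{B}_1})$, and since $M_{\mathcal{B}_1\mathcal{B}_1}\in\BW_{\pi}^{|\mathcal{B}_1|\times|\mathcal{B}_1|}$ the closed form applies again with $M$ replaced by $M_{\mathcal{B}_1\mathcal{B}_1}$, giving
\[
R_{\lambda}(S(M);\mathcal{B}_1,\mathcal{B}_2) = R_{\lambda}\bigl(S(M_{\mathcal{B}_1\mathcal{B}_1});\mathcal{B}_2\bigr) = S\bigl((M_{\mathcal{B}_1\mathcal{B}_1})_{\mathcal{B}_2\mathcal{B}_2}\bigr) = S\bigl(M_{\mathcal{B}_2\mathcal{B}_2}\bigr),
\]
where the last equality uses $\mathcal{B}_2\subset\mathcal{B}_1$. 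Iterating $m$ times gives $R_{\lambda}(S(M);\mathcal{B}_1,\dots,\mathcal{B}_m)=S(M_{\mathcal{B}_m\mathcal{B}_m})=R_{\lambda}(S(M);\mathcal{B}_m)$.

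The only real obstacle is a pitfall rather than a difficulty: $S(M)$ itself need not lie in $\BW_{\pi}^{n\times n}$ (as discussed after Example~\ref{ex:5}), so Theorem~\ref{theorem2} cannot be invoked directly to prove (ii). The strategy above sidesteps this by observing that each intermediate reduction returns the spectral inverse of a matrix that does lie in $\BW_{\pi}$, so the existence and inversion guarantees of Section~\ref{sec:imr} can be reapplied cleanly at every step.
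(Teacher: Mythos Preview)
Your argument is correct, and for part (iii) it coincides with the paper's: both simply unpack the definition of $R_\lambda(S(M);\mathcal{B})$ after writing $S(M)=A+\lambda I$ with $A=(M-\lambda I)^{-1}$. The route diverges for (i) and (ii). For (i), the paper argues that $A\in\BW_\pi^{n\times n}$ and then asserts that $[S(M)]_{\mathcal{II}}-\lambda I=A_{\mathcal{II}}$ is invertible because it is ``not identically zero''; you instead invoke the block-inversion identity $(N^{-1})_{\mathcal{II}}=(N/N_{\mathcal{BB}})^{-1}$ with $N=M-\lambda I$, which exhibits $A_{\mathcal{II}}$ explicitly as an inverse and is the cleaner justification. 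The substantive difference is in (ii). The paper proves (ii) by substituting the blocks of $S(M)$ into the proof of Lemma~\ref{lemma2} and then rerunning the induction of Theorem~\ref{theorem2}. You instead extract the closed form $R_\lambda(S(M);\mathcal{B})=S(M_{\mathcal{BB}})$ from the Schur-complement identity $A/A_{\mathcal{II}}=\bigl((A^{-1})_{\mathcal{BB}}\bigr)^{-1}$ and iterate it; since each $M_{\mathcal{B}_k\mathcal{B}_k}$ remains in $\BW_\pi$, the existence and invertibility hypotheses regenerate at every step. This is a genuinely different and more economical argument: it bypasses the block algebra of Lemma~\ref{lemma2} entirely and yields, as a bonus, the identity $R_\lambda(S(M);\mathcal{B})=S(M_{\mathcal{BB}})$, which the paper does not state.
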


\begin{proof}
For $M \in\BW^{n\times n}_{\pi}$ suppose $\mathcal{B}$ and $\mathcal{I}$ form a non-empty partition of $N$. By lemmas~\ref{lemma1} and \ref{lemma3}, the matrix $S(M)$ exists and
\[
 S(M) - \lambda I = (M-\lambda I)^{-1} \in \BW_\pi^{n \times n}.
\]
Equating blocks in the previous equation gives that the matrices $[S(M)]_{\cB\cB} - \lambda I$, $[S(M)]_{\cB\cI}$, $[S(M)]_{\cI\cB}$ and $[S(M)]_{\cI\cI} - \lambda I$ all have entries in $\BW_\pi$. Moreover $[S(M)]_{\cI\cI} - \lambda I$ is not identically zero so its inverse exists. We deduce that the reduction of $S(M)$ exists and is
\[
 \begin{aligned}
 R_\lambda(S(M); \cB) -\lambda I & = ([S(M)]_{\cB\cB} - \lambda I) - [S(M)]_{\cB\cI} \left([S(M)]_{\cI\cI} - \lambda I \right)^{-1} [S(M)]_{\cI\cB} \\ &\in \BW_\pi^{|\cB| \times |\cB|}.
 \end{aligned}
\]
To prove (iii), simply notice that $[S(M)]_{\cB\cB} - \lambda I = \left[(M-\lambda I)^{-1} \right]_{\cB\cB}$, $[S(M)]_{\cI\cB} = [S(M) - \lambda I]_{\cI \cB} = \left[(M-\lambda I)^{-1} \right]_{\cI\cB}$, $[S(M)]_{\cB\cI} =\left[(M-\lambda I)^{-1} \right]_{\cB\cI}$ and $[S(M)]_{\cI\cI} - \lambda I = \left[(M-\lambda I)^{-1} \right]_{\cI\cI}$. These relations imply (iii).

Substituting each submatrix $M_{\mathcal{RC}}$ in the proof of lemma 2 by the matrix
$$S(M)_{\mathcal{RC}}=
\begin{cases}
(M-\lambda I)^{-1}_{\mathcal{RC}}+\lambda I \ &\text{if} \ \mathcal{R}=\mathcal{C},\\
(M-\lambda I)^{-1}_{\mathcal{RC}} \ &\text{otherwise}
\end{cases}$$
and then following the proof of theorem \ref{theorem2} using $S(M)$ instead of $M$ yields a proof of part (ii).
\end{proof}

Theorem \ref{theorem4} states that any matrix $M\in\mathbb{W}^{n\times n}_{\pi}$ has a spectral inverse and that this inverse can be reduced over any index set. Observe the similarity between equations (\ref{eq:sch}) and part (iii) of theorem \ref{theorem4}.

\subsection{Gershgorin-Type Estimates}

If $M\in\mathbb{W}^{n\times n}$ then its inverse spectrum $\sigma^{-1}(M)$ are the complex numbers at which the determinant $\det(M-\lambda I)$ is undefined. Since the determinant of a matrix is composed of various products and sums of its entries then equations (\ref{eq:add}) and (\ref{eq:mult}) imply the following proposition. Hereinafter for $A \subset \complex$, the set $\overline{A}$ is the complement of $A$ in $\complex$.

\begin{proposition}\label{prop0}
If $M(\lambda)\in\mathbb{W}^{n\times n}$ then $\sigma^{-1}(M)\subseteq \overline{\dom(M)}$.
\end{proposition}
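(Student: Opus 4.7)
The plan is to prove the contrapositive: if $\lambda_0\in\dom(M)$, then $\lambda_0\notin\sigma^{-1}(M)$. Equivalently, if none of the entries of $M(\lambda)$ has a pole at $\lambda_0$, then after writing $\det(M-\lambda I)=p/q$ in lowest terms we must have $q(\lambda_0)\neq 0$.

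First I would write each entry in reduced form, say $M_{ij}(\lambda)=p_{ij}(\lambda)/q_{ij}(\lambda)$ with $p_{ij}$ and $q_{ij}$ sharing no common linear factor. By definition of $\dom(M)$, the assumption $\lambda_0\in\dom(M)$ means $q_{ij}(\lambda_0)\neq 0$ for all $i,j$. Correspondingly, each entry of $M-\lambda I$ is a rational function whose denominator (before cancellation) is one of the $q_{ij}$'s, and hence does not vanish at $\lambda_0$.

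Next I would expand the determinant using the Leibniz formula as in \eqref{eq:detsum}. By repeated application of the multiplication rule \eqref{eq:mult}, each term $\sgn(\rho)\prod_i (M-\lambda I)_{i,\rho(i)}$ is a rational function whose denominator divides $\prod_{i} q_{i,\rho(i)}(\lambda)$. Combining all such terms via the addition rule \eqref{eq:add} produces a rational function $P(\lambda)/Q(\lambda)$ where $Q(\lambda)$ can be taken to divide the product $\prod_{i,j} q_{ij}(\lambda)$. Since none of the $q_{ij}$ vanish at $\lambda_0$, neither does $Q$.

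Finally, I would invoke the fact that equations \eqref{eq:add} and \eqref{eq:mult} prescribe cancellation of common linear factors, so the reduced form $p(\lambda)/q(\lambda)$ of $\det(M-\lambda I)$ has $q$ dividing $Q$. Therefore $q(\lambda_0)\neq 0$, and by Definition \ref{def1.1} $\lambda_0\notin\sigma^{-1}(M)$, establishing $\sigma^{-1}(M)\subseteq\overline{\dom(M)}$. The only subtlety worth pinning down carefully is the last observation: that although cancellations occur during the sums and products, they can only remove factors from the denominator, never introduce new roots; this is immediate from \eqref{eq:add} and \eqref{eq:mult} but is the one step where the bookkeeping must be done cleanly.
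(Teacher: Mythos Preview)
Your argument is correct and is precisely the approach the paper takes: the paper states, without further elaboration, that since $\det(M-\lambda I)$ is built from sums and products of the entries of $M$, equations \eqref{eq:add} and \eqref{eq:mult} immediately give the proposition. You have simply written out in detail what the paper leaves as a one-line remark, including the contrapositive framing and the observation that cancellation can only remove, never add, roots of the denominator.
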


Phrased another way, the inverse eigenvalues of a matrix $M\in\mathbb{W}^{n\times n}$ are complex numbers at which the matrix $M$ is undefined, i.e. in the complement of $\dom(M)$. However, it is not always the case that the converse holds as the following example demonstrates.

\begin{example}
Consider the reduced matrix
$$R(M;\mathcal{B})=\left[
\begin{array}{cc}
\frac{1}{\lambda-1}&\frac{1}{\lambda-1}\\
\frac{1}{\lambda}&\frac{\lambda+1}{\lambda}
\end{array}
\right]\in\BW^{2\times 2}
$$
found in example \ref{ex:2}. As computed in example \ref{ex:3}, we have $\sigma^{-1}(R(M;\mathcal{B}))=\emptyset$ and yet $\overline{\dom(R(M;\mathcal{B}))} = \{ 0,1 \}$.
\end{example}

To improve upon proposition \ref{prop0} we look for methods of estimating the inverse spectrum of a matrix. The following well-known theorem due to Gershgorin gives a simple method for approximating the eigenvalues of a square matrix with complex valued entries.

\begin{theorem}{\textbf{(Gershgorin \cite{Gershgorin:1931:UDA})}}\label{theorem:gersh}
Let $M\in\mathbb{C}^{n\times n}$. Then all eigenvalues of $M$ are contained in the set
$$\Gamma(M)=\bigcup^n_{i=1}\big\{\lambda\in \mathbb{C}:|\lambda-M_{ii}|\leq \sum_{j=1,j\neq i}^n |M_{ij}|\big\}.$$
\end{theorem}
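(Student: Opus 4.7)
The plan is to use the standard eigenvector-argument proof, since the statement is Gershgorin's classical theorem for complex matrices and no rational-function machinery from earlier in the paper is needed. The main idea is to fix an eigenpair and then localize $\lambda$ by looking at the row of $M$ corresponding to the largest-modulus component of an eigenvector.

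First I would let $\lambda\in\sigma(M)$ and pick a nonzero eigenvector $v\in\complex^n$ with $Mv=\lambda v$. Next I would choose an index $i\in\{1,\dots,n\}$ that maximizes $|v_i|$; since $v\neq 0$, we have $|v_i|>0$. Reading off the $i$th row of the identity $Mv=\lambda v$ gives
\[
 \sum_{j=1}^n M_{ij}v_j = \lambda v_i,
\]
which I would rearrange as $(\lambda-M_{ii})v_i = \sum_{j\neq i} M_{ij}v_j$.

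Then I would take absolute values, apply the triangle inequality, and use the maximality $|v_j|\leq |v_i|$ to obtain
\[
 |\lambda-M_{ii}|\,|v_i| \;\leq\; \sum_{j\neq i}|M_{ij}|\,|v_j| \;\leq\; |v_i|\sum_{j\neq i}|M_{ij}|.
\]
Dividing by the positive number $|v_i|$ yields $|\lambda-M_{ii}|\leq \sum_{j\neq i}|M_{ij}|$, which places $\lambda$ in the $i$th Gershgorin disc and hence in $\Gamma(M)$. Since $\lambda\in\sigma(M)$ was arbitrary, we conclude $\sigma(M)\subseteq\Gamma(M)$.

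There is essentially no obstacle here: the only subtle point is making sure $|v_i|>0$ before dividing, which is guaranteed by $v\neq 0$ together with the choice of $i$ as an argmax of $|v_j|$. The argument is entirely self-contained and does not require any of the rational-function or isospectral-reduction apparatus developed earlier in the paper.
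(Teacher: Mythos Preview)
Your proof is correct and is the standard eigenvector-argmax argument for Gershgorin's theorem. The paper does not actually supply a proof of this statement; it is quoted as the classical result of Gershgorin (with a citation) and used as background for the extension in theorem~\ref{theorem5}, so there is nothing further to compare.
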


In \cite{Bunimovich:2011:IGR} it was shown that Gershgorin's theorem can be extended to matrices $M\in\mathbb{W}^{n\times n}$. Our goal in this Section is to further extend this result by using the spectral inverse introduced in Section~\ref{sec:spinv} to estimate the inverse spectrum (or resonances) of matrix $M\in\mathbb{W}^{n\times n}$. To do so we first define the notion of a \textit{polynomial extension} of the matrix $M$.

\begin{definition}
For $M(\lambda)\in\mathbb{W}^{n\times n}$ with entries $M_{ij}=p_{ij}/q_{ij}$ let $L_i(M)$=$\prod_{j=1}^{n}q_{ij}$ for $1\leq i\leq n$. We call the matrix $\overline{M}$ given by
$$\overline{M}_{ij}=\begin{cases}
L_iM_{ij} \hspace{.85in} i\neq j\\
L_i\big(M_{ij}-\lambda\big)+\lambda \hspace{0.2in} i=j
                         \end{cases}, \ \ 1\leq i,j\leq n
$$ the \textit{polynomial extension} of $M$.
\end{definition}

Note that for any $M\in\mathbb{W}^{n\times n}$ the matrix $\overline{M}\in\mathbb{C}[\lambda]^{n\times n}$. The following theorem extends Gershgorin's original theorem to matrices in $\mathbb{W}^{n\times n}$ (see theorem 3.4 in \cite{Bunimovich:2011:IGR}).

\begin{theorem}\label{theorem5}
Let $M(\lambda)\in\mathbb{W}^{n\times n}$. Then $\sigma(M)$ is contained in the set
$$\Gamma(M)=\bigcup_{i=1}^n\big\{\lambda\in\mathbb{C}:|\lambda-\overline{M}_{ii}|\leq\sum_{j=1,j\neq i}^n |\overline{M}_{ij}|\big\}.$$
\end{theorem}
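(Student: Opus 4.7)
The plan is to reduce the rational-function Gershgorin statement to the classical Gershgorin theorem (Theorem \ref{theorem:gersh}) applied to the complex matrix $\overline{M}(\lambda_0)$, where $\lambda_0 \in \sigma(M)$. The key observation is that the polynomial extension was engineered precisely so that, at any eigenvalue $\lambda_0$ of $M$, the complex number $\lambda_0$ is an eigenvalue of $\overline{M}(\lambda_0)$ in the standard sense.

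First I would verify the row-scaling identity for $\overline{M} - \lambda I$. A direct computation from the definition gives, for each $i$,
\[
(\overline{M} - \lambda I)_{ii} = L_i(M_{ii}-\lambda)+\lambda - \lambda = L_i (M-\lambda I)_{ii},
\qquad
(\overline{M} - \lambda I)_{ij} = L_i M_{ij} = L_i (M-\lambda I)_{ij} \text{ for } i\neq j,
\]
so the $i$-th row of $\overline{M} - \lambda I$ is $L_i(\lambda)$ times the $i$-th row of $M - \lambda I$. Taking determinants (as elements of $\BW$) yields
\[
\det(\overline{M} - \lambda I) = \Big(\prod_{i=1}^n L_i(\lambda)\Big) \det(M - \lambda I).
\]

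Next, write $\det(M-\lambda I) = p(\lambda)/q(\lambda) \in \BW$ in reduced form, so that by Definition \ref{def1.1} we have $\sigma(M) = \{\lambda : p(\lambda) = 0\}$. Clearing denominators, the above identity becomes a polynomial identity
\[
q(\lambda)\,\det(\overline{M} - \lambda I) = p(\lambda)\,\prod_{i=1}^n L_i(\lambda)
\]
in $\complex[\lambda]$. Since by definition of $\BW$ the polynomials $p$ and $q$ share no common linear factors, $q$ must divide $\prod_i L_i$, and so $\det(\overline{M}-\lambda I) = p(\lambda)\,h(\lambda)$ for some polynomial $h$. In particular, any $\lambda_0 \in \sigma(M)$ satisfies $p(\lambda_0) = 0$, hence
\[
\det\big(\overline{M}(\lambda_0) - \lambda_0 I\big) = 0,
\]
so $\lambda_0$ is a (classical) eigenvalue of the complex matrix $\overline{M}(\lambda_0) \in \complex^{n\times n}$.

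Applying the classical Gershgorin Theorem \ref{theorem:gersh} to $\overline{M}(\lambda_0)$ then gives an index $i$ with $|\lambda_0 - \overline{M}_{ii}(\lambda_0)| \le \sum_{j \neq i} |\overline{M}_{ij}(\lambda_0)|$, which is precisely the statement that $\lambda_0 \in \Gamma(M)$. The only subtle step in the whole argument is the divisibility claim $q \mid \prod_i L_i$; this is where the no-common-linear-factor convention baked into $\BW$ is essential, and I expect this to be the only place where one has to be careful about distinguishing formal rational-function identities from pointwise evaluations.
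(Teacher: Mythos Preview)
Your argument is correct. Note, however, that the paper does not actually prove this theorem; it is quoted from \cite{Bunimovich:2011:IGR} (their Theorem~3.4), so there is no in-paper proof to compare against. The row-scaling identity $(\overline{M}-\lambda I)_{ij} = L_i(\lambda)\,(M-\lambda I)_{ij}$ followed by reduction to the classical Gershgorin theorem for the complex matrix $\overline{M}(\lambda_0)$ is the natural route and almost certainly coincides with the argument in the cited reference.

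One minor simplification: the divisibility step $q \mid \prod_i L_i$ is correct but not strictly necessary. Since $p$ and $q$ share no common linear factors, any $\lambda_0 \in \sigma(M)$ satisfies $q(\lambda_0) \neq 0$; evaluating the rational-function identity
\[
\det(\overline{M}-\lambda I) = \Big(\prod_{i=1}^n L_i(\lambda)\Big)\,\frac{p(\lambda)}{q(\lambda)}
\]
directly at $\lambda = \lambda_0$ already gives $\det(\overline{M}(\lambda_0)-\lambda_0 I) = 0$, bypassing the factorization argument.
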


We call the set $\Gamma(M)$ the \emph{Gershgorin-type region} of the matrix $M$ or simply its Gershgorin region. (The notation in \cite{Bunimovich:2011:IGR} is $\mathcal{BW}_{\Gamma}(M)$).

An immediate corollary to theorem \ref{theorem3} and theorem \ref{theorem5} is the following.

\begin{corollary}\label{cor2}
Let $M(\lambda)\in\mathbb{W}^{n\times n}$. Then $\sigma^{-1}(M)$ is contained in the set
$$\Gamma\big(S(M)\big)=\bigcup_{i=1}^n\big\{\lambda\in\mathbb{C}: |\lambda-\overline{S(M)}_{ii}|\leq\sum_{j=1,j\neq i}^n |\overline{S(M)}_{ij}|\big\}.$$
\end{corollary}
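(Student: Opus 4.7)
The plan is to obtain this corollary as a short, direct consequence of Theorem \ref{theorem5} (the Gershgorin-type bound in $\mathbb{W}^{n\times n}$) applied to the spectral inverse $S(M)$, combined with Theorem \ref{theorem3}, which swaps $\sigma$ and $\sigma^{-1}$ under $S$. Implicitly, in order for the right-hand side $\Gamma(S(M))$ to make sense, we need $S(M)\in\mathbb{W}^{n\times n}$ to exist; this is automatic for $M\in\mathbb{W}^{n\times n}_\pi$ by Lemma \ref{lemma3}, and more generally holds whenever $M-\lambda I$ is invertible over $\mathbb{W}$. So the corollary should be read with this mild existence assumption in force, and under that assumption the proof is essentially two lines.

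First, I would apply Theorem \ref{theorem5} directly to the matrix $S(M)\in\mathbb{W}^{n\times n}$. Since Theorem \ref{theorem5} is stated with no restriction on the matrix beyond its belonging to $\mathbb{W}^{n\times n}$, it applies verbatim to $S(M)$ and yields
\[
 \sigma\bigl(S(M)\bigr)\ \subseteq\ \Gamma\bigl(S(M)\bigr),
\]
where $\Gamma(S(M))$ is the Gershgorin-type region built from the polynomial extension $\overline{S(M)}$ in exactly the form appearing in the corollary's statement.

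Second, I would invoke Theorem \ref{theorem3}, which gives the identity $\sigma(S(M))=\sigma^{-1}(M)$. Substituting this equality on the left of the inclusion from the previous step gives $\sigma^{-1}(M)\subseteq\Gamma(S(M))$, which is the desired conclusion.

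There is no real obstacle here; the only points worth verifying in passing are that the polynomial extension $\overline{S(M)}$ used to form $\Gamma(S(M))$ in the corollary is the same construction used in Theorem \ref{theorem5} (which is immediate from the definition), and that Theorem \ref{theorem3} applies because $S(M)$ has been assumed to exist. With these observations the corollary follows in a single chain of two inclusions.
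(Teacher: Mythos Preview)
Your argument is correct and is exactly the one the paper intends: it states that the corollary is an immediate consequence of Theorem~\ref{theorem3} and Theorem~\ref{theorem5}, which is precisely the two-step chain you describe. Your remark about the implicit existence of $S(M)$ is also appropriate.
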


\begin{example}\label{ex:6}
Let $M\in\mathbb{W}^{4\times 4}$ be the matrix considered in example \ref{ex:5}. Then
$$\overline{S(M)}=\left[
\begin{array}{cccc}
-\lambda(\lambda^2-1)^9&-\lambda(\lambda^2-1)^8&-\lambda^2(\lambda^2-1)^7&-\lambda^3(\lambda^2-1)^6\\
0&-\lambda(\lambda^2-1)^5&-\lambda^2(\lambda^2-1)^4&-\lambda^3(\lambda^2-1)^3\\
0&0&-\lambda(\lambda^2-1)^2&-\lambda^2(\lambda^2-1)^1\\
0&0&0&-\lambda
\end{array}
\right]+\lambda I.$$
The region $\Gamma\big(S(M)\big)$ is shown in figure \ref{fig1} (left).

\begin{figure}
\begin{center}
\begin{tabular}{cc}
 \includegraphics[width=0.49\textwidth]{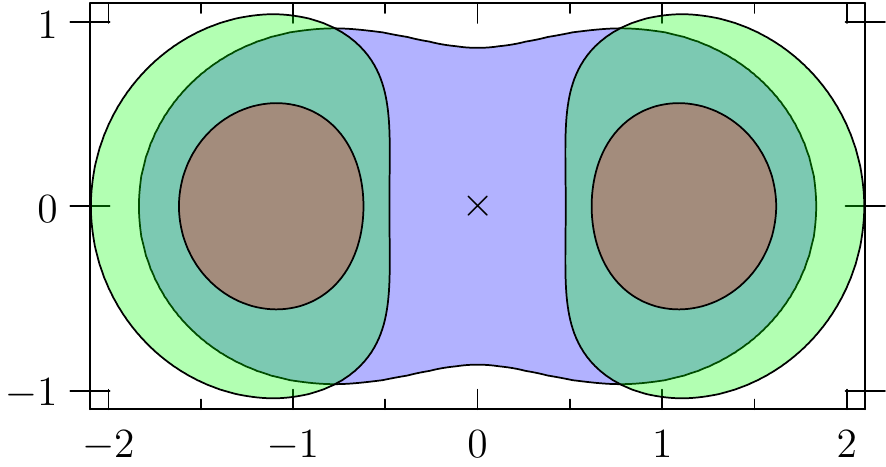} &
 \includegraphics[width=0.49\textwidth]{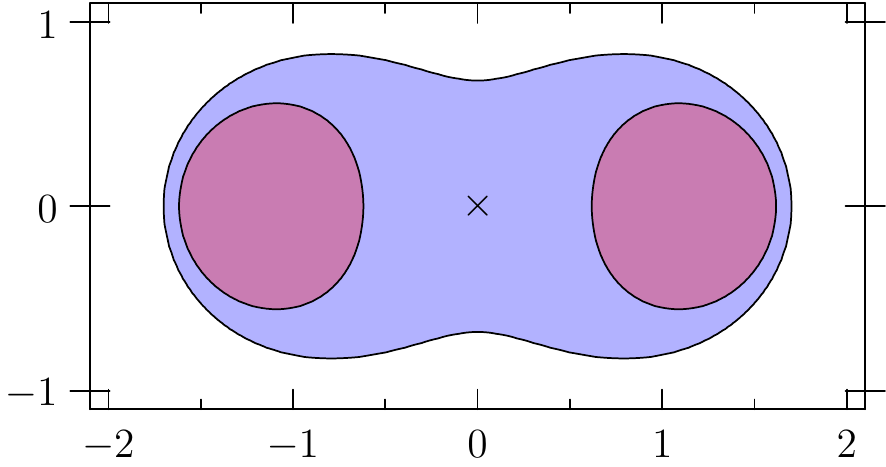}
\end{tabular}
\end{center}
\caption{Left: $\Gamma(S(M))$. Right: $\Gamma(R(S(M);\mathcal{B}))$ where the inverse spectrum $\sigma^{-1}(M)=\{0,0,0,0\}$ is indicated by a ``$\times$''.}\label{fig1}
\end{figure}

We note that the Gershgorin-type region $\Gamma(S(M))$ is the union of the sets
$$\Gamma(S(M))_i=\big\{\lambda\in\mathbb{C}: |\lambda-\overline{S(M)}_{ii}|\leq\sum_{j=1,j\neq i}^n |\overline{S(M)}_{ij}|\big\},$$ for $i=1,2,3,4$. The regions
$\Gamma(S(M))_1$, $\Gamma(S(M))_2$, and $\Gamma(S(M))_3$ in figure \ref{fig1}  are shown in blue, green, and red respectively. Transparency is used to highlight the intersections.  The same strategy is used in Section \ref{sec:psi} to display pseudospectra (or inverse pseudospectra) of a matrix.The set $\Gamma(S(M))_4=\{0\}$ is contained in the inverse spectrum $\sigma^{-1}(M)=\{0,0,0,0\}$, which is indicated in the figure.
\end{example}

One of the main results of \cite{Bunimovich:2011:IGR} is that the Gershgorin region of a reduced matrix $R(M;\mathcal{B})$ is a subset of the Gershgorin region of the unreduced matrix $M\in\mathbb{W}^{n\times n}$ (see theorem 5.1 \cite{Bunimovich:2011:IGR}). In the same way the inverse eigenvalue estimates given in corollary \ref{cor2} can be improved via the process of isospectral matrix reduction.

\begin{theorem}{\textbf{(Improved Inverse Eigenvalue Estimates)}}\label{impgersh}
Let $M(\lambda)\in\mathbb{W}^{n\times n}_{\pi}$ where $\mathcal{B}$ is any nonempty subset of $N$. Then $$\Gamma\big(R(S(M);\mathcal{B})\big)\subseteq\Gamma\big(S(M)\big).$$
\end{theorem}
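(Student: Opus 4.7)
The plan is to reduce this result to the analogous Gershgorin-containment statement for ordinary isospectral reductions, namely Theorem 5.1 of \cite{Bunimovich:2011:IGR}, which asserts that for a rational-function matrix $N$ with an existing reduction, $\Gamma(R(N;\mathcal{B})) \subseteq \Gamma(N)$. Applying this with $N = S(M)$ would give the desired inclusion immediately.

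First I would verify that all the objects in the statement are well-defined. Since $M \in \BW^{n\times n}_\pi$, Lemma \ref{lemma3} guarantees that the spectral inverse $S(M)$ exists, and Theorem \ref{theorem4}(i) guarantees that $R_\lambda(S(M);\mathcal{B})$ exists for any nonempty $\mathcal{B}\subset N$. Moreover, the polynomial extension $\overline{S(M)}$ is well-defined since each entry of $S(M)$ is a rational function of $\lambda$, so both Gershgorin regions $\Gamma(S(M))$ and $\Gamma(R(S(M);\mathcal{B}))$ make sense via Theorem \ref{theorem5}.

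Next, I would invoke the Gershgorin containment result for isospectral reductions from \cite{Bunimovich:2011:IGR}. The content of that theorem is that, for any rational function matrix $N$ whose reduction over $\mathcal{B}$ exists, the Gershgorin disks for the reduced matrix collapse into the union of disks for the original matrix. Applied with $N = S(M)$, this gives
\[
\Gamma\big(R(S(M);\mathcal{B})\big) \subseteq \Gamma\big(S(M)\big),
\]
which is precisely the claim.

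The main obstacle is a compatibility check: the statement in \cite{Bunimovich:2011:IGR} is stated in a setting where the matrix being reduced belongs to a class on which reductions are automatically defined, whereas $S(M)$ in general does not lie in $\BW^{n\times n}_\pi$ (as noted just after Example \ref{ex:5}). Thus the subtle point is to confirm that the cited Gershgorin containment really holds in the more general setting where the reduction exists but the ambient matrix is merely in $\BW^{n\times n}$. Since Theorem \ref{theorem4}(i) supplies existence of $R(S(M);\mathcal{B})$, and the proof of the Gershgorin containment in \cite{Bunimovich:2011:IGR} only uses the Schur-complement identity together with the triangle inequality applied to the polynomial extension, the argument carries through unchanged once one replaces $M$ by $S(M)$ throughout. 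Citing this adaptation explicitly, or reproducing the short chain of estimates with $S(M)$ in place of $M$, completes the proof.
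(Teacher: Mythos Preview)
Your approach is essentially the same as the paper's: the paper states only that ``a proof of theorem~\ref{impgersh} can be obtained by following the proof of theorem 5.1 in \cite{Bunimovich:2011:IGR} and by using theorem~\ref{theorem4}(ii).'' The one refinement worth noting is that you invoke theorem~\ref{theorem4}(i) for existence, whereas the paper points to part (ii); this matters because the argument in \cite{Bunimovich:2011:IGR} proceeds via sequential single-index reductions, so one needs theorem~\ref{theorem4}(ii) to guarantee that such a step-by-step reduction of $S(M)$ is well-defined at each stage and agrees with the direct reduction $R(S(M);\mathcal{B})$.
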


A proof of theorem \ref{impgersh} can be obtained by following the proof of theorem 5.1 in \cite{Bunimovich:2011:IGR} and by using theorem \ref{theorem4}(ii).

\begin{example}\label{ex:7}
Let $M\in\mathbb{W}^{4\times 4}$ be the matrix given in example \ref{ex:5}. For the index set $\mathcal{B}=\{1,2,3\}$, the reduction of the spectral inverse of $M$ is 
$$R(S(M);\mathcal{B})=\left[
\begin{array}{ccc}
\frac{-\lambda}{\lambda^2-1}&\frac{-\lambda}{(\lambda^2-1)^2}&\frac{-\lambda^2}{(\lambda^2-1)^3}\\
0&\frac{-\lambda}{\lambda^2-1}&\frac{-\lambda^2}{(\lambda^2-1)^2}\\
0&0&\frac{-\lambda}{\lambda^2-1}.
\end{array}
\right]+\lambda I$$
Its polynomial extension follows:
$$\overline{R(S(M);\mathcal{B})}=\left[
\begin{array}{ccc}
-\lambda(\lambda^2-1)^9&-\lambda(\lambda^2-1)^8&-\lambda^2(\lambda^2-1)^7\\
0&-\lambda(\lambda^2-1)^5&-\lambda^2(\lambda^2-1)^4\\
0&0&-\lambda(\lambda^2-1)^2
\end{array}
\right]+\lambda I.$$
The Gershgorin-type region of the reduced matrix $R(S(M);\mathcal{B})$ is shown in figure \ref{fig1} (right) where one can see that $\sigma^{-1}(M)\subset\Gamma\big(R(S(M);\mathcal{B})\big)\subset\Gamma\big(S(M)\big).$ The regions $\Gamma(R(S(M);\cB))_1$ and $\Gamma(R(S(M);\cB))_2$ are in blue and red, respectively.

\end{example}

\begin{remark}
In this section we have considered how Gershgorin-type estimates can be used to estimate the inverse spectrum of a matrix $M\in\mathbb{W}^{n\times n}$. We note that the same is true of the eigenvalue estimates associated with Brauer, Brualdi, and Varga (see \cite{Bunimovich:2011:IGR} for details).
\end{remark}

\section{Pseudospectra and pseudoresonances}\label{sec:psi}
A pseudospectrum of a matrix $M\in\complex^{n \times n}$ is essentially the collection of scalars that behave, to within a given tolerance, as an eigenvalue of $M$. These values indicate to what extent the eigenvalues of the matrix $M$ are stable under perturbation of the matrix entries. See e.g. \cite{Trefethen:2005:SP} for a review of pseudospectra including their history and applications.

We first extend the notion of pseudospectra to matrices in $\BW^{n\times n}_\pi$. Then we show that the spectral inverse of a matrix can be used to define {\em inverse pseudospectra} for matrices in $\BW^{n\times n}_\pi$. The inverse pseudospectra or \emph{pseudoresonances} of $M$ are the scalars that behave, to within a certain tolerance, as inverse eigenvalues or resonances of $M$.

We study pseudoresonances and their relation to pseudospectra in Section~\ref{sec:psire}. In Section~\ref{sec:redpsi} we show that an isospectral reduction shrinks the pseudospectrum of matrix for a given tolerance. Throughout this discussion we consider the simple mass-spring network introduced in Section \ref{sec:imr} to give a physical interpretation to these concepts.

Before formally extending the notion of pseudospectra to matrices in $\mathbb{W}^{n\times n}$ we note that pseudospectra has been previously generalized to matrix polynomials in \cite{Lancaster:2005:PMP,Boulton:2008:PMP}.

\subsection{Pseudospectra} \label{sec:psisp}
For a matrix $A\in\mathbb{C}^{n\times n}$, if $\lambda\in\sigma(A)$ then there is always at least one eigenvector $\textbf{v}\in\mathbb{C}^n$ of $A$ associated with $\lambda$. However, recall from Section \ref{sec:mat} that a matrix $M(\lambda)\in\mathbb{W}_{\pi}^{n\times n}$ may have an eigenvalue $\lambda_0$ for which $M(\lambda_0)$ is undefined. This may seem problematic especially if we would like to find an eigenvector associated with $\lambda_0$. In fact, it is still possible to do so.

Assuming $\lambda_0$ is a solution to the equation $\det(M(\lambda)-\lambda I)=0$, the standard theory of linear algebra implies that there is a vector $\textbf{v}$ such that when the product $(M(\lambda)-\lambda I)\textbf{v}$ is evaluated at $\lambda=\lambda_0$, the result is the zero vector. Keeping this sequence in mind, we define the product of a matrix and vector as follows. For any $M(\lambda)\in\mathbb{W}^{n\times n}_{\pi}$ and $\textbf{v}\in\mathbb{C}^n$ we let the product
$$(M(\lambda)-\lambda I)\textbf{v}\equiv (M(s) - s I) \bv|_{s=\lambda}.$$
This definition allows us to associate an eigenvectors to each eigenvalue of a matrix $M(\lambda)\in\mathbb{W}_{\pi}^{n\times n}$. To demonstrate this idea we give the following example.

\begin{example}
Consider the matrix $M(\lambda)\in\mathbb{W}^{2\times 2}$ given by
$$M(\lambda)=
\left[
\begin{array}{cc}
1&\frac{1}{\lambda-1}\\
0&1
\end{array}
\right].
$$
Here, one can readily see that $\sigma(M)=\{1,1\}$. Although $M(1)$ is undefined, the vector $\mathbf{v}=[1 \ 0]^T$ has the property
$$(M(1)- 1I)\mathbf{v}=
\left[
\begin{array}{cc}
1-s&\frac{1}{s-1}\\
0&1-s
\end{array}
\right]
\left[
\begin{array}{c}
1\\
0
\end{array}
\right]\Big|_{s=1}=
\left[
\begin{array}{c}
0\\
0
\end{array}
\right].
$$
By definition the vector $\mathbf{v}$ is an eigenvector associated with the eigenvalue $1$ despite the fact that $M(\lambda)$ is not defined for $\lambda=1$.

Importantly, for the vector norm $||\cdot||$ we have
$$||(M(\lambda)- \lambda I)\mathbf{v}||=
\left\|\left[
\begin{array}{c}
1-\lambda\\
0
\end{array}
\right]\right\|.
$$
Hence, the size of $(M(\lambda)-\lambda I)\mathbf{v}$ varies continuously with respect to $\lambda$ even where $M(\lambda)$ is undefined. This is useful since we study values of $\lambda$ that act almost like eigenvalues of $M(\lambda)$.
\end{example}

Suppose that for a given tolerance $\epsilon>0$, there is a scalar $\lambda \in \complex$ and a unit vector $\bv\in\complex^n$ for which  $\| (M(\lambda) - \lambda I) \bv\| < \eps$. If this is the case then the vector $\bv$ is said to be an {\em $\eps$-pseudoeigenvector} of the matrix $M(\lambda)$ corresponding to the {\em $\eps$-pseudoeigenvalue} $\lambda$. The $\epsilon-$pseudospectrum of $M(\lambda)$ is defined as the set of all such $\lambda$. We state this and two other equivalent definitions of the $\epsilon-$pseudospectrum below. For $\Omega\subset\mathbb{C}$, let $cl(\Omega)$ be the closure of $\Omega$ in $\mathbb{C}$.

\begin{definition}\label{def:psisp}
Let $\epsilon>0$. The {\em $\epsilon$-pseudospectrum} of $M(\lambda) \in\mathbb{W}^{n\times n}_\pi$ is defined equivalently by:
\begin{enumerate}[(a)]
 \item {\em Eigenvalue perturbation}:
\[
 \sigma_\epsilon(M)=cl\big(\{\lambda \in \complex : \| (M(\lambda) - \lambda I) \bv\| < \eps ~\text{for some $\bv \in \complex^n$ with $\|\bv\|=1$}\}\big).
\]
 \item {\em The resolvent}:
\[
 \sigma_\epsilon(M) = cl\big(\{ \lambda \in \complex : || (M(\lambda)-\lambda I)^{-1} || > \epsilon ^{-1}\}\big).
\]
 \item {\em Perturbation of the matrix}:
\[
 \sigma_\epsilon(M) =cl\big(\{ \lambda \in \complex : \lambda \in \sigma(M(\lambda)+E) ~\text{for some}~E\in \complex^{n \times n}~\text{with}~||E||<\eps \}\big).
\]
\end{enumerate}
\end{definition}

As a consequence of definition \ref{def:psisp}, the eigenvalues of a matrix $M\in\mathbb{W}_{\pi}^{n\times n}$ belong to all its pseudospectra:  
$$\sigma(M)\subset\sigma_\epsilon(M) \ \text{for each} \ \epsilon>0.$$
The proof that definitions \ref{def:psisp}(a)--(c) are equivalent (provided the vector norm in (a) and the operator norm in (b)--(c) are consistent) relies on the proof that definitions \ref{def:psisp}(a)--(c) are equivalent for scalar valued matrices. For completeness, the proofs are included in \ref{app:psi}.

We now compare the pseudospectra of a matrix and its reduction.

\begin{example}\label{ex:ps1}
Consider the matrices $M$ and $R(M;\mathcal{B})$ given in example~\ref{ex:2} where $\mathcal{B}=\{1,2\}$. The pseudospectra of both matrices are displayed in figure~\ref{fig:ps1} for $\epsilon=1$, $10^{-1/2}$, $10^{-1}$ using the matrix 2-norm. Notice that although $0,1\in\sigma(M)$ these values do not belong to $\sigma( R(M;\mathcal{B}))$ because of cancellations resulting from the matrix reduction, i.e. $M_{\mathcal{II}}=\{0,0,1,1\}$. However, for the $\epsilon$ we consider  $0,1\in\sigma_{\eps}(R(M;\mathcal{B}))$ meaning that these eigenvalues remain as pseudoeigenvalues of the reduced matrix.

\begin{figure}
 \begin{center}
 \begin{tabular}{cc}
  \includegraphics[width=0.49\textwidth]{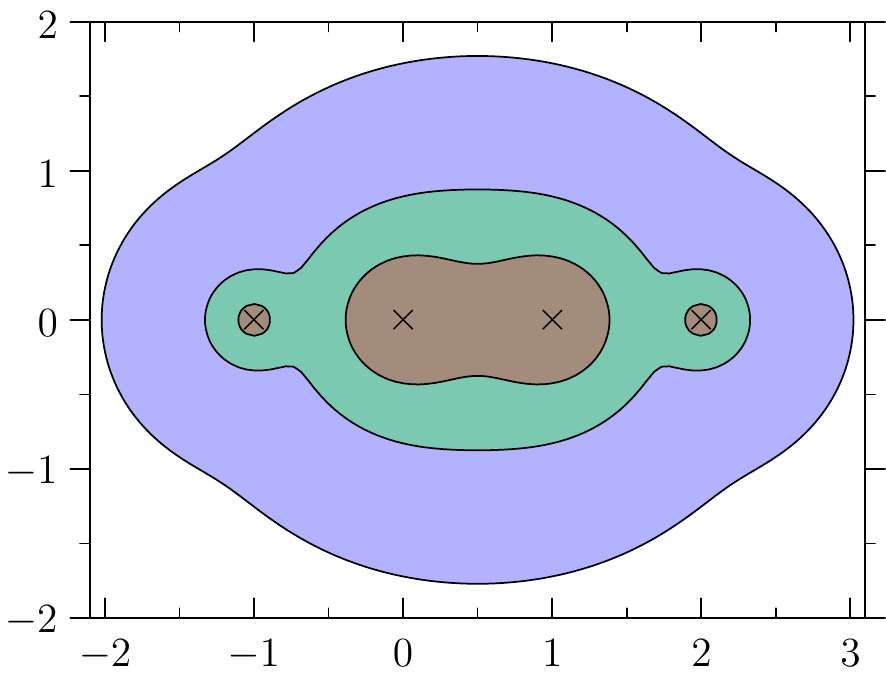} &
  \includegraphics[width=0.49\textwidth]{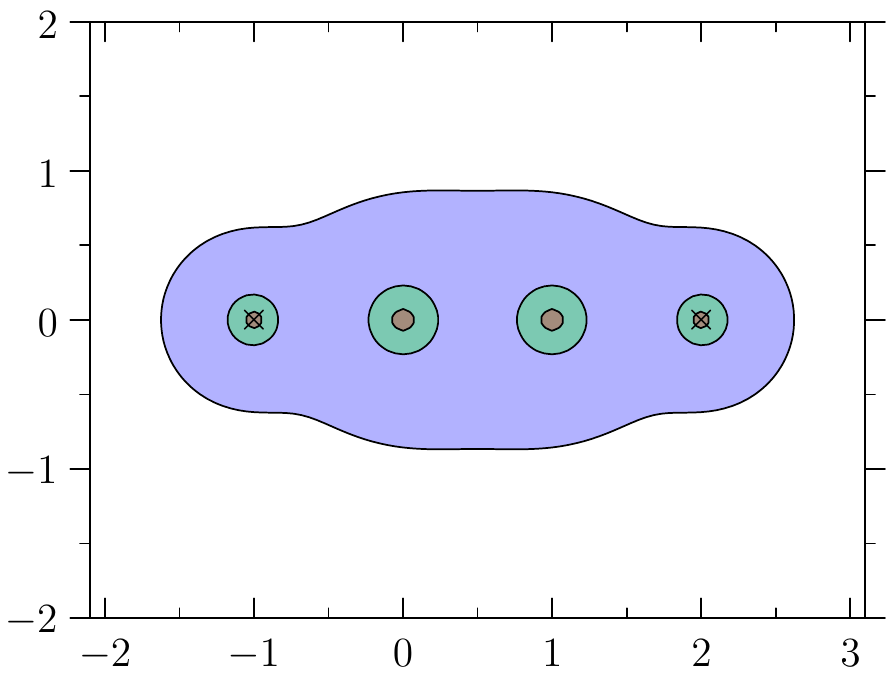}\\
   $\sigma_{\epsilon}(M)$ & $\sigma_{\epsilon}(R(M;\{1,2\}))$
  \end{tabular}
 \end{center}
\caption{Pseudospectra of the matrices given in example~\ref{ex:2} for $\eps =1$ (blue), $\eps=10^{-1/2}$ (green) and $\eps=10^{-1}$ (red), obtained with the matrix 2-norm. The respective spectra are indicated by ``$\times$''.}\label{fig:ps1}
\end{figure}
\end{example}

To give a possible physical interpretation of pseudospectra we again consider a mass-spring network.

\begin{example}\label{ex:spring1}
For the mass-spring network considered in example \ref{ex:spring0} recall that the eigenvalues of $K$ correspond to frequencies for which there exists a non-zero displacement that generates no forces on these nodes. The pseudoeigenvalues of this system have a similar physical interpretation. Namely, the pseudospectra indicate the frequencies for which there is a displacement that generates ``small'' forces relative to the (norm of the) displacement.

For example, as the frequency $\omega^2=2.1$ in figure~\ref{fig:spring2}(right) is within the green tolerance region there is a non-zero vector of displacements such that the forces generated from this displacement have norm $\eps=10^{-1/2}$ times less than the norm of this displacement vector. That is, if we only have access to the boundary nodes $\cB = \{1,4\}$ then the pseudoeigenvalues of $R_{\omega^2} ( K; \cB)$ correspond to frequencies for which there is a displacement at the boundary nodes $\cB$ that generates very small forces on these nodes. The pseudospectra regions of $R_{\lambda} ( K; \cB)$ are shown in figure~\ref{fig:spring2}(b) for $\eps=1$, $10^{-1/2}$, $10^{-1}$.

Observe that the pseudospectra of $R_{\lambda} ( K; \cB)$ are included in the pseudospectra of $K$ for a given tolerance $\eps$. That is, less access to network nodes means there are less frequencies for which displacements generate relatively small forces. Phrased less formally, the more a network is reduced, the less susceptible to perturbations its eigenvalues are.
\end{example}

\begin{figure}
 \begin{center}
 \begin{tabular}{cc}
  \includegraphics[width=0.49\textwidth]{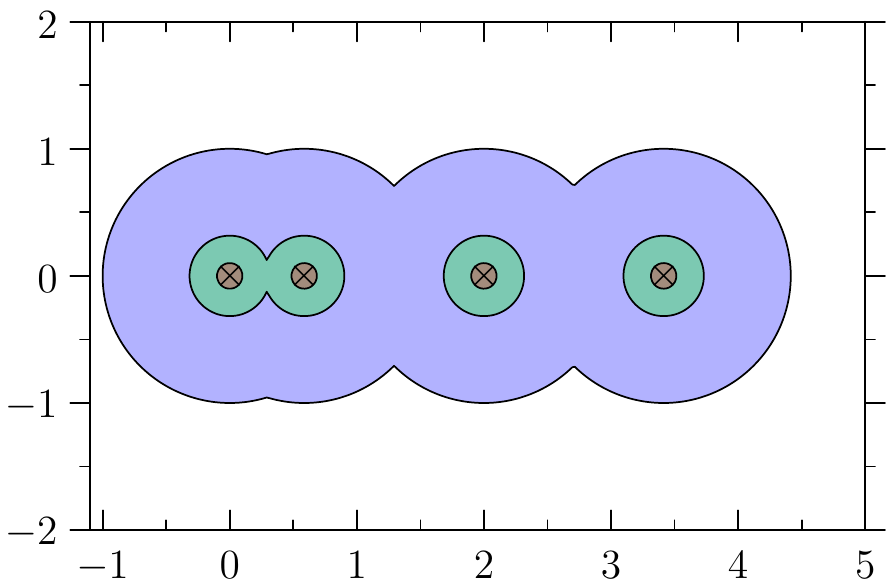} &
  \includegraphics[width=0.49\textwidth]{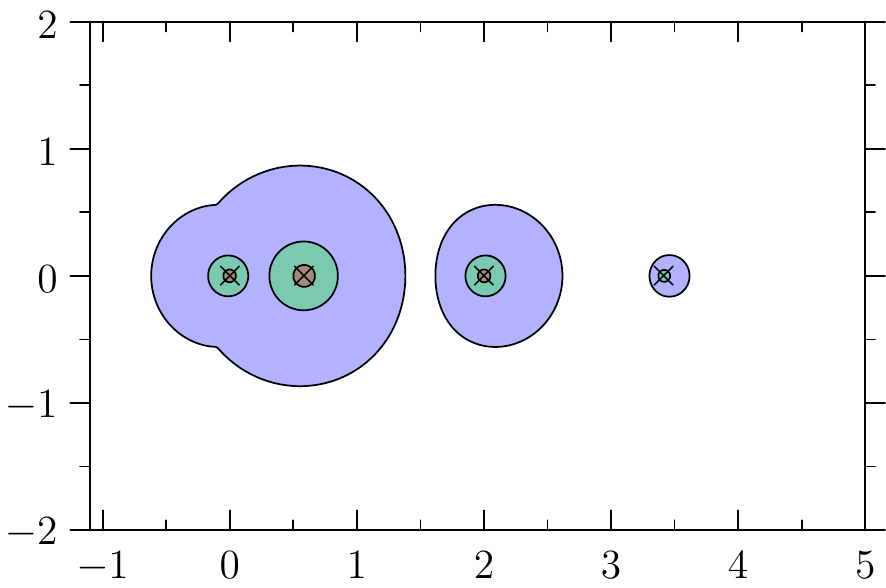}\\
$\sigma_{\epsilon}(K)$ & $\sigma_{\epsilon}(R(K;\{1,2\}))$
 \end{tabular}
 \end{center}
\caption{Pseudospectra of the stiffness matrix $K$ for the mass spring system in example~\ref{ex:spring0} and of its reduction $R_\lambda(K,\{1,4\})$. The latter corresponds to the effective stiffness of the mass-spring system when we only have access to nodes $\{1,4\}$. The tolerances shown are $\eps=1$ (blue), $\eps=10^{-1/2}$ (green) and $\eps=10^{-1}$ (red), using the matrix 2-norm. The ``$\times$'' correspond to spectra of the respective matrices.}\label{fig:spring2}
\end{figure}

Note that in both examples \ref{ex:ps1} and \ref{ex:spring1} we have $\sigma(M)\subset\sigma_{\eps}(R(M;\mathcal{B}))$ for the $\epsilon$ we consider. It seems that even under reduction, the $\epsilon$-pseudospectrum remembers where the eigenvalues of the original matrix are. However, this is not always the case, as the following example shows.

\begin{example}\label{ex:non}
Consider the matrix $M\in\mathbb{C}^{3\times 3}$ given by
$$M=\left[\begin{array}{ccc}
0&1&0\\
1&0&0\\
0&1&0
\end{array}\right],$$
with $\sigma(M)=\{0,\pm 1\}$. By reducing $M$ over $\mathcal{B}=\{1\}$ we obtain the matrix $R(M;\mathcal{B})=[1/\lambda]$ for which
$$\|(R(M;\mathcal{B})-\lambda I)^{-1}\|=\Big|\frac{\lambda}{1-\lambda^2}\Big|.$$
Hence, $0\notin\sigma_{\epsilon}(R(M;\mathcal{B}))$ for any $\epsilon$. Moreover, as $\sigma(M_{\mathcal{II}})=\{0,0\}$ for $\mathcal{I}=\{2,3\}$ it is not always the case that either $\sigma(M)$ or $\sigma(M_{\mathcal{II}})$ is contained in $\sigma_{\epsilon}(R(M;\mathcal{B}))$.
\end{example}

\subsection{Pseudoresonances} \label{sec:psire}

Recall that the resonances of a matrix $M(\lambda)\in\mathbb{W}^{n\times n}_{\pi}$ are the eigenvalues of its spectral inverse. Thus we may think of ``almost resonances'' or pseudoresonances of $M(\lambda)$ as pseudoeigenvalues of $S(M)$. The precise definition is below, together with other equivalent definitions. These are analogous to the pseudospectra definitions~\ref{def:psisp}(a)--(c).

\begin{definition}\label{def:psir}
Let $\epsilon>0$. The set of $\epsilon-$pseudoresonances of a matrix $M(\lambda) \in \BW^{n \times n}_\pi$ is defined equivalently by:
\begin{enumerate}[(a)]
 \item {\em Resonance perturbation:}
\[
 \sigma_{\eps}^{-1}(M) = cl\big(\{ \lambda \in \complex : \| (M(\lambda) - \lambda I)^{-1} \bv \| < \epsilon ~\text{for some}~ \bv\in\complex^n ~\text{with}~ \|\bv\| = 1\}\big).
\]
 \item {\em The inverse resolvent:}
 \[
 \sigma_{\eps}^{-1}(M) = cl\big(\{ \lambda \in \complex : || M(\lambda) - \lambda I || > \epsilon^{-1} \}\big).
 \]
 \item {\em Perturbation of the spectral inverse:}
 \[
 \sigma_{\eps}^{-1}(M) = cl\big(\{ \lambda \in \complex : \lambda \in \sigma(S(M) + E) ~\text{for some}~E \in \complex^{n\times n}~\text{with}~||E||<\eps \}\big).
 \]
\end{enumerate}
\end{definition}

Note that definition \ref{def:psir} is simply definition \ref{def:psisp} in which $M(\lambda)$ is replaced by the matrix $S(M)$ on the right hand side of parts (a)--(c). Hence, the equivalence of definitions \ref{def:psir}(a)--(c) follow from arguments similar those in \ref{app:psi}. Moreover,
if $M(\lambda)\in\mathbb{W}^{n\times n}_{\pi}$ then
$$\sigma^{-1}(M)\subset \sigma^{-1}_{\epsilon}(M) \ \text{for} \ \text{each} \ \epsilon>0.$$

Observe that if $w(\lambda)=p(\lambda)/q(\lambda)\in\mathbb{W}_{\pi}$ then by definition $\pi(p)\leq\pi(q)$. Hence we have the limit,
$$\lim_{|\lambda|\rightarrow\infty}|w(\lambda)|=c,$$ for some constant $c\geq 0$. Therefore $||M(\lambda)-\lambda I||=\mathcal{O}(\lambda)$ for large $\lambda$, for matrices $M \in \BW_\pi^{n\times n}$. This leads to the following remark.

\begin{remark}
If $M\in\mathbb{W}_{\pi}^{n\times n}$ then the value $\lambda = \infty$ is always a pseudoresonance. This means that for each $\eps>0$ the set $\sigma^{-1}_\eps(M)$ contains the complement of a ball centered at the origin with sufficiently large radius. (See figure \ref{fig:pr1} for example.)
\end{remark}

\begin{example}\label{ex:non2}
In figure~\ref{fig:pr1} we show the pseudoresonance regions of the matrix $R(M;\{1,2\})$ from example~\ref{ex:2} for $\eps=1$, $10^{-1/2}$, $10^{-1}$. As is shown in example~\ref{ex:3}, the inverse spectrum of $R(M;\{1,2\})$ is empty. However, the pseudoresonance regions reveal that the eigenvalues $\sigma(M_{\mathcal{II}})=\{0,1\}$ act as resonances. Specifically, $\sigma(M_{\mathcal{II}})\subset\sigma^{-1}_{\eps}(R(M;\{1,2\}))$.

In figure \ref{fig:ps1} (left) and figure \ref{fig:pr1} note that for the $\epsilon$ we consider
$$\sigma_{\eps}(R(M;\{1,2\}))\cap\sigma_{\eps}^{-1}(R(M;\{1,2\}))\neq\emptyset.$$
That is, values near the set $\sigma(M_{\mathcal{II}})=\{0,1\}$ are both $\eps$-pseudoeigenvalues and $\eps$-pseudoresonances of $R(M;\{1,2\})$.
\end{example}

As it turns out, the situation in example \ref{ex:non2} does not hold for every matrix reduction. Similar to example \ref{ex:non}, if
$$M=
\left[
\begin{array}{cc}
1&1\\
0&0
\end{array}
\right],$$
and we consider the sets $\mathcal{B}=\{1\}$ and $\mathcal{I}=\{2\}$, then one can show the set $\sigma(M_{\mathcal{II}})=\{0\}$ is not contained in $\sigma_\eps^{-1}(R(M;\mathcal{B}))$ for small $\epsilon>0$. That is, the eigenvalues $\sigma(M_{\mathcal{II}})$ do not always act as resonances of $R(M;\mathcal{B})$.

\begin{figure}
 \begin{center}
  \includegraphics[width=0.49\textwidth]{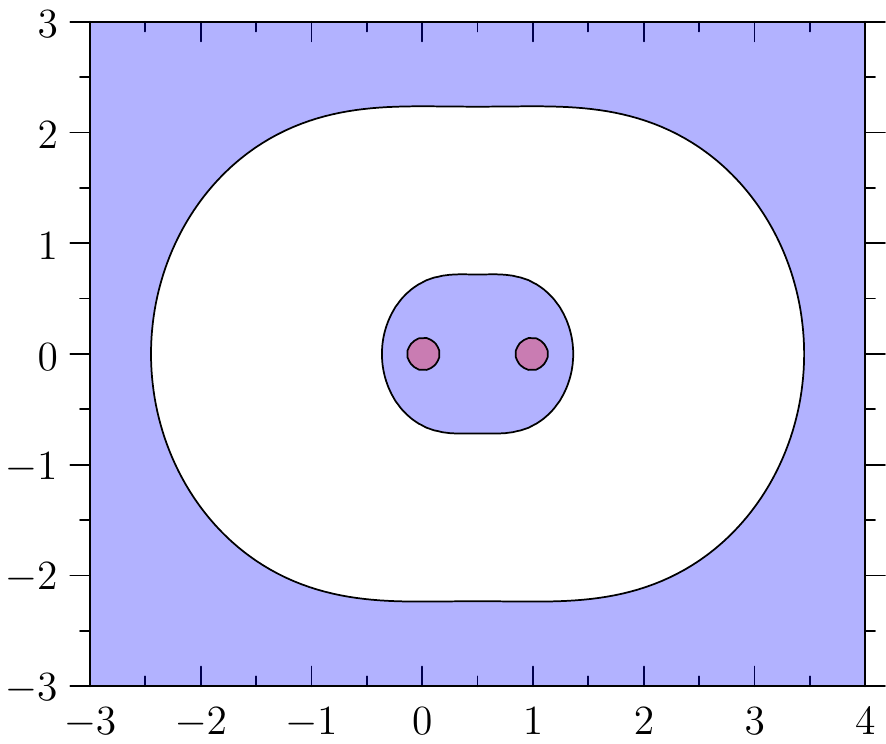}
 \end{center}
\caption{Pseudoresonance regions for the matrix $R(M;\{1,2\})$ given in example~\ref{ex:2}, with $\eps = 10^{-1/2}$ (red), and $\eps = 10^{-1}$ (blue). All the points in the display region belong to the pseudoresonance region for $\eps = 1$.}\label{fig:pr1}
\end{figure}

As with the pseudospectra studied in Section \ref{sec:psisp} we give a physical interpretation of pseudoresonances using a mass spring system.

\begin{example}\label{ex:spring2}
The mass spring system considered in example~\ref{ex:spring0} has resonances when restricted to a set of boundary nodes $\cB\subset \{1,4\}$. The pseudoresonances of the reduced system correspond to frequencies for which there is a displacement on the boundary that generates relatively large forces at these nodes. In figure~\ref{fig:spring3} we display some pseudoresonance regions of the mass-spring system restricted to the set $\mathcal{B}=\{1,4\}$.
\end{example}

As we allow $\eps$ to be any positive value there is nothing preventing an eigenvalue of a matrix $M$ from also being an $\eps$-pseudoresonance of $M$ (or a resonance from being a $\eps$-pseudoeigenvalue). In other words, we could have an $\eps>0$ for which
 \[
  \sigma^{-1}(M) \cap \sigma_\eps(M) \neq \emptyset
  ~~\text{or}~~
  \sigma(M) \cap \sigma_\eps^{-1}(M) \neq \emptyset
 \]
as the following example shows.

\begin{example}\label{ex:nondis}
Consider the following matrix $M(\lambda)\in\BW^{2 \times 2}_\pi$ given by
\[
 M(\lambda) = \begin{bmatrix} \frac{1}{\lambda-1} & 0\\ 0 & 0 \end{bmatrix}.
\]
The spectrum and inverse spectrum of $M(\lambda)$ are respectively
\[
 \sigma(M) = \{ 0 , (1\pm \sqrt{5})/{2} \}
 ~\text{and}~
 \sigma^{-1}(M) = \{1\}.
\]
Now notice that for $0 \in \sigma(M)$ we have
\[
 \| M(0) - 0 I \| = 1,
\]
which implies that $0 \in \sigma^{-1}_\eps(M)$ for all $\eps\geq1$. The resolvent of $M$ is
\[
 (M(\lambda) - \lambda I)^{-1} = \begin{bmatrix}
 \frac{\lambda -1}{-\lambda^2+\lambda+1} & 0 \\ 0 & -\frac{1}{\lambda} \end{bmatrix}.
\]
Hence, for $\lambda=1$ we have
\[
 \| (M(1) - I)^{-1} \| = 1,
\]
which means that $1 \in \sigma_\eps(A)$ for all $\eps\geq1$.
\end{example}

\begin{figure}
 \begin{center}
  \includegraphics[width=0.49\textwidth]{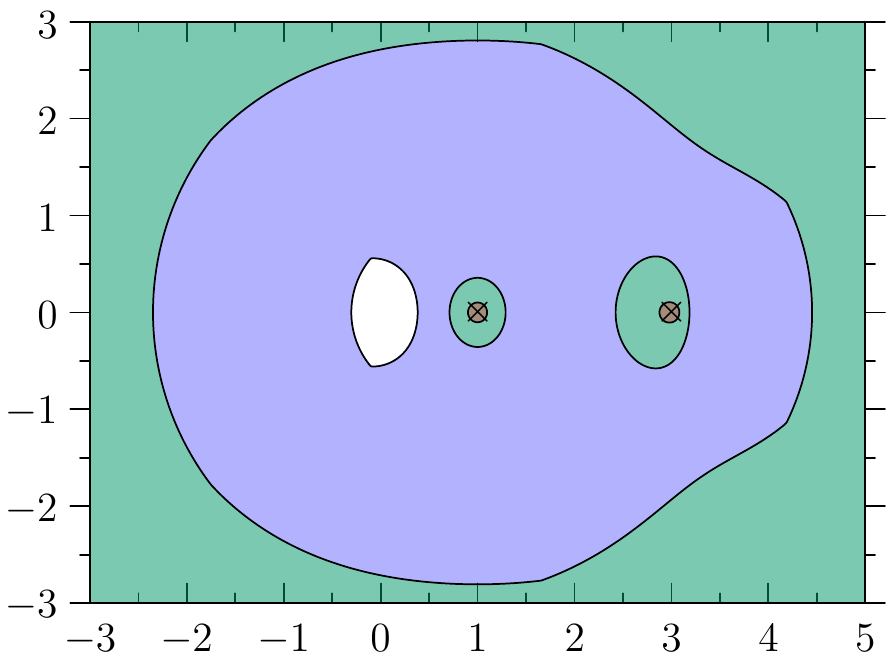}
 \end{center}
\caption{Pseudoresonance regions of the matrix $R_\lambda(K;\{1,4\})$ given in example~\ref{ex:spring0}, for $\eps = 1$ (blue), $\eps = 10^{-1/2}$ (green) and $\eps = 10^{-1}$ (red).  Resonances are shown with $\times$. All the points in the display region, the white region excepted, belong to the pseudoresonance region for $\eps=1$.}\label{fig:spring3}
\end{figure}

As the pseudoresonances of a matrix $M\in\mathbb{W}^{n\times n}_{\pi}$ can be defined in terms of the pseudoeigenvalues of the spectral inverse $S(M)$, we can generalize theorem~\ref{theorem3} as follows.

\begin{theorem}\label{thm:psir}
Suppose $M(\lambda) \in \BW^{n \times n}_\pi$ and $\epsilon>0$. Then
\begin{equation*}
 \sigma_\eps^{-1} ( M ) = \sigma_{\eps} (S(M)) \ \ \text{and} \ \ \sigma_{\eps} (M) = \sigma_\eps^{-1} (S (M) ).
\end{equation*}
\end{theorem}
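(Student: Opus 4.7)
The plan is to derive both equalities from the resolvent characterizations (definitions~\ref{def:psisp}(b) and \ref{def:psir}(b)), exploiting the single algebraic identity that drives everything: by construction of the spectral inverse,
\[
 S(M) - \lambda I \;=\; (M(\lambda) - \lambda I)^{-1}.
\]
Since $M\in\BW^{n\times n}_\pi$, lemma~\ref{lemma3} guarantees that $S(M)$ exists, and $(M-\lambda I)^{-1}\in\BW^{n\times n}_\pi$ by the proof of lemma~\ref{lemma1}. Taking inverses of the displayed identity (where both sides are meaningful as rational function matrices) gives the companion identity
\[
 (S(M) - \lambda I)^{-1} \;=\; M(\lambda) - \lambda I,
\]
and this pair of formulas is all that is needed.

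For the first equality, I would apply definition~\ref{def:psisp}(b) to $S(M)$ and substitute the second identity:
\[
 \sigma_\eps(S(M)) = \mathrm{cl}\bigl(\{\lambda\in\complex : \|(S(M)-\lambda I)^{-1}\| > \eps^{-1}\}\bigr)
                  = \mathrm{cl}\bigl(\{\lambda\in\complex : \|M(\lambda)-\lambda I\| > \eps^{-1}\}\bigr),
\]
and the right-hand side is exactly $\sigma_\eps^{-1}(M)$ by definition~\ref{def:psir}(b). For the second equality, I would apply definition~\ref{def:psir}(b) to $S(M)$ and substitute the first identity:
\[
 \sigma_\eps^{-1}(S(M)) = \mathrm{cl}\bigl(\{\lambda\in\complex : \|S(M)-\lambda I\| > \eps^{-1}\}\bigr)
                       = \mathrm{cl}\bigl(\{\lambda\in\complex : \|(M(\lambda)-\lambda I)^{-1}\| > \eps^{-1}\}\bigr),
\]
which is $\sigma_\eps(M)$ by definition~\ref{def:psisp}(b).

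The only thing that requires a moment's care is that the two identities above are equalities of rational-function matrices, whereas the set-builder definitions involve evaluating norms at particular complex numbers $\lambda$. The set of $\lambda\in\complex$ at which any entry of $M-\lambda I$, $(M-\lambda I)^{-1}$, $S(M)-\lambda I$, or $(S(M)-\lambda I)^{-1}$ fails to be defined is finite, so the two superlevel sets $\{\lambda : \|(S(M)-\lambda I)^{-1}\| > \eps^{-1}\}$ and $\{\lambda : \|M(\lambda)-\lambda I\| > \eps^{-1}\}$ agree except possibly at finitely many points; taking closures erases this discrepancy, and the same remark handles the second equality. Thus the proof reduces to invoking definitions~\ref{def:psisp}(b) and~\ref{def:psir}(b) together with the algebraic inversion identity, and the presence of the closure in definition~\ref{def:psisp} and~\ref{def:psir} is precisely what makes this clean.
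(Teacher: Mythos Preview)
Your proof is correct and follows essentially the same route as the paper: both use the resolvent characterizations (definitions~\ref{def:psisp}(b) and~\ref{def:psir}(b)) together with the identity $S(M)-\lambda I=(M(\lambda)-\lambda I)^{-1}$. Your additional remark about the finitely many exceptional points and the role of the closure is a welcome clarification that the paper's proof leaves implicit.
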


\begin{proof}
Let $M(\lambda) \in \BW^{n \times n}_\pi$ and $\epsilon>0$. Observe that,
\begin{align*}
 \sigma_{\eps}(M) &= cl\big(\{ \lambda \in \complex : || (M(\lambda) - \lambda I)^{-1} || > \epsilon^{-1} \}\big); \ \text{and}\\
 \sigma_{\eps}^{-1}(S(M)) &= cl\big(\{ \lambda \in \complex : || S(M) - \lambda I || > \epsilon^{-1} \})
\end{align*}
from definitions \ref{def:psisp}(b) and \ref{def:psir}(b) respectively. Since $S(M)-\lambda I=(M(\lambda) - \lambda I)^{-1}$ then $\sigma_{\eps} (M) = \sigma_\eps^{-1} (S (M) )$. The equality $\sigma_\eps^{-1} ( M ) = \sigma_{\eps} (S(M))$ follows similarly.
\end{proof}

Because of the seemingly invertible relationship between pseudospectra and inverse pseudospectra in theorem \ref{thm:psir}, it is tempting to think the $\eps-$pseudoresonances of a matrix is the complement of its $\eps^{-1}-$pseudoeigenvalues. In general, however, the two are not equal as can be seen in the next proposition.

\begin{theorem}
For $M(\lambda) \in \BW^{n \times n}_\pi$ let $\eps>0$. Then $cl\big(\overline{ \sigma_{1/\eps} ( M )}\big) \subseteq \sigma_\eps^{-1} (M)$. However, the reverse inclusion does not hold in general.
\end{theorem}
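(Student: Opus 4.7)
The plan is to prove the inclusion via its contrapositive, and to exhibit a counterexample to the reverse using example~\ref{ex:nondis}.

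\textbf{The inclusion.} Since $cl\big(\overline{\sigma_{1/\eps}(M)}\big) = \complex \setminus \interior\big(\sigma_{1/\eps}(M)\big)$, the inclusion is equivalent to the contrapositive
\[
\lambda\notin\sigma_\eps^{-1}(M)\;\Longrightarrow\;\lambda\in\interior\big(\sigma_{1/\eps}(M)\big).
\]
Given $\lambda\notin\sigma_\eps^{-1}(M)$, definition~\ref{def:psir}(b) yields an open neighborhood $W$ of $\lambda$ on which $\|M(\mu)-\mu I\|\leq\eps^{-1}$; in particular $W$ contains no resonances of $M$, so $M(\mu)-\mu I$ is defined throughout $W$. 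The submultiplicative inequality $\|A\|\,\|A^{-1}\|\geq\|AA^{-1}\|=\|I\|=1$, valid for consistent matrix norms, then gives, at each $\mu\in W$ where $M(\mu)-\mu I$ is invertible,
\[
\|(M(\mu)-\mu I)^{-1}\|\;\geq\;\frac{1}{\|M(\mu)-\mu I\|}\;\geq\;\eps,
\]
while points $\mu\in W$ at which $M(\mu)-\mu I$ is singular belong to $\sigma(M)\subseteq\sigma_{1/\eps}(M)$ automatically.

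The remaining step is to upgrade this non-strict bound to the strict inequality inside the closure that defines $\sigma_{1/\eps}(M)$, i.e., to show that $\{\nu:\|(M(\nu)-\nu I)^{-1}\|>\eps\}$ is dense in $W$. Because $M\in\BW_\pi^{n\times n}$, the entries of $(M(\mu)-\mu I)^{-1}$ are non-constant rational functions of $\mu$, and the map $\mu\mapsto\log\|(M(\mu)-\mu I)^{-1}\|$ is subharmonic on $W$ as the logarithm of the operator norm of a holomorphic matrix-valued function; this forces the level set $\{\|(M-\mu I)^{-1}\|=\eps\}$ to have empty interior in $W$. Density then yields $W\subseteq\sigma_{1/\eps}(M)$, so $\lambda\in\interior\big(\sigma_{1/\eps}(M)\big)$. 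This density upgrade is the hardest step of the proof.

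\textbf{Counterexample to the reverse.} I take $M(\lambda)=\begin{bmatrix}1/(\lambda-1)&0\\0&0\end{bmatrix}$ from example~\ref{ex:nondis}. The resolvent $(M(\mu)-\mu I)^{-1}$ carries the diagonal entry $-1/\mu$, so $\|(M(\mu)-\mu I)^{-1}\|\to\infty$ as $\mu\to 0$; together with $0\in\sigma(M)$ this places an open neighborhood of $0$ inside $\sigma_{1/\eps}(M)$ for every $\eps>0$, so $0\in\interior\big(\sigma_{1/\eps}(M)\big)$ and hence $0\notin cl\big(\overline{\sigma_{1/\eps}(M)}\big)$. On the other hand $\|M(0)-0I\|=1$ as noted in example~\ref{ex:nondis}, and a direct check along $\mu=it$ for small $t\neq 0$ shows $\|M(\mu)-\mu I\|>1$ arbitrarily close to $0$, placing $0\in\sigma_\eps^{-1}(M)$ for every $\eps\geq 1$. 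This exhibits $0\in\sigma_\eps^{-1}(M)\setminus cl\big(\overline{\sigma_{1/\eps}(M)}\big)$ and the reverse inclusion fails.
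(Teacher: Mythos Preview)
Your argument and the paper's share the same engine: the submultiplicative bound $\|A\|\,\|A^{-1}\|\geq 1$, equivalently $\|M(\lambda)-\lambda I\|^{-1}\leq\|(M(\lambda)-\lambda I)^{-1}\|$. The paper applies this directly, rewriting $cl\big(\overline{\sigma_{1/\eps}(M)}\big)=cl\big(\interior\{\|(M-\lambda I)^{-1}\|\leq\eps\}\big)$ and $\sigma_\eps^{-1}(M)$ in terms of $\{\|M-\lambda I\|^{-1}\leq\eps\}$, then reading off the inclusion between these sets. You instead run the contrapositive, which is a legitimate reorganisation of the same idea. For the counterexample, the paper evaluates at $\lambda=2$ with $\eps=2/3$, giving a single point in $\sigma_{2/3}^{-1}(M)\cap\sigma_{3/2}(M)$; your choice $\lambda=0$ with $\eps\geq1$ also works and the verification along $\mu=it$ is correct.

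There is, however, a genuine gap in your ``density upgrade''. You claim that subharmonicity of $\mu\mapsto\log\|(M(\mu)-\mu I)^{-1}\|$ forces the level set $\{\|(M(\mu)-\mu I)^{-1}\|=\eps\}$ to have empty interior. This implication is false for subharmonic functions in general: $f(z)=\max(0,\log|z|)$ is subharmonic on $\complex\setminus\{0\}$, non-constant, and yet identically $0$ on the open punctured unit disk. In your setting the function is bounded \emph{below} by $\log\eps$ on $W$, so equality on an open subset would realise a local \emph{minimum}; but subharmonic functions obey a maximum principle, not a minimum principle, so nothing is contradicted. To make your route rigorous you would need a stronger input than subharmonicity (for instance, an argument that the norm of this particular holomorphic matrix family cannot be locally constant). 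The paper's direct set-algebra sidesteps having to name this step explicitly, since $cl(\interior A)\subseteq cl(A)$ already gives the needed direction without invoking any level-set structure.
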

This theorem means that, in general, there is not enough information in the pseudospectra of a matrix to reconstruct its pseudoresonances.  We now proceed with the proof of the proposition.
\begin{proof}
For $M(\lambda) \in \BW^{n \times n}_\pi$ and a matrix norm $||\cdot||$, the inequality
\begin{equation}\label{eq:lst}
||M(\lambda)-\lambda I||^{-1}\leq ||(M(\lambda)-\lambda I)^{-1}||
\end{equation}
holds for any $\lambda\in \dom(M)-\sigma(M)$. Let $\interior(\Omega)$ denote the \emph{interior} of the set $\Omega\subseteq\mathbb{C}$, i.e. the largest open subset of $\Omega$. For $\epsilon>0$, using definition \ref{def:psisp}(b)
\begin{align*}
 cl\big(\overline{ \sigma_{1/\eps} ( M )}\big)&=cl\big(\overline{cl(\{\lambda\in\mathbb{C}:||(M(\lambda)-\lambda I)^{-1}||> \epsilon\})}\big)\\
 &=cl\big(\interior(\{\lambda\in\mathbb{C}:||(M(\lambda)-\lambda I)^{-1}||\leq \epsilon\})\big)\\
 &=cl\big(\{\lambda\in\mathbb{C}:||(M(\lambda)-\lambda I)^{-1}||\leq \epsilon\}\big)
\end{align*}
Similarly, it follows from definition \ref{def:psir}(b) that
\begin{align*}
\sigma_\eps^{-1} (M)&=cl\big(\{\lambda\in\mathbb{C}:||M(\lambda)-\lambda I||>\epsilon^{-1}\}\big)\\
&=cl\big(\{\lambda\in\mathbb{C}:||M(\lambda)-\lambda I||^{-1}\leq \epsilon\}\big).
\end{align*}
By inequality \eqref{eq:lst} the set
$$\{\lambda\in\mathbb{C}:||(M(\lambda)-\lambda I)||^{-1}\leq \epsilon\}\subseteq\{\lambda\in\mathbb{C}:||(M(\lambda)-\lambda I)^{-1}||\leq \epsilon\}$$
implying the first half of the result.

To show that the reverse inclusion does not hold in general, take for instance the matrix $M(\lambda)$ from example~\ref{ex:nondis}. It is easy to compute $\| M(2) - 2I \| = 2$ and $\| (M(2) - 2I)^{-1} \| = 1$. Taking $\eps = 2/3$, we clearly have $2 \in \sigma_{2/3}^{-1}(M) \cap \sigma_{3/2}(M)$.
\end{proof}

\subsection{Pseudospectra Under Isospectral Reduction} \label{sec:redpsi}
One of the major goals of this paper is to understand how the pseudospectra of a matrix $M\in\BW^{n \times n}_\pi$ is affected by an isospectral reduction. In order to study this change in pseudospectra, we need to consider two vector norms. Specifically, we need one norm $\|\cdot\|$ defined on $\complex^n$ for the pseudospectrum of $M$ and another norm $\|\cdot\|'$ defined on $\complex^m$ ($m<n$) for the pseudospectrum of $R(M;\mathcal{B})$. Our comparison of the pseudospectra of the original and reduced matrices assumes that for $\bv = (\bv_\cB^T, \bv_\cI^T)^T \in \complex^n$ these two norms are related by
\begin{align} \label{eq:normprop}
       \norm{\bv} =
       \norm{ \begin{bmatrix} \bv_\cB \\ \bv_\cI \end{bmatrix} }
 \geq  \norm{ \begin{bmatrix} \bv_\cB \\ 0 \end{bmatrix} }
    =  \norm{\bv_\cB}'.
\end{align}
Examples of norms satisfying property \eqref{eq:normprop} are the $p-$norms for $1\leq p \leq \infty$. For the sake of simplicity, we use the same notation for both of these $\complex^n$ and $\complex^{m}$ norms.

The following theorem describes how the $\epsilon$-pseudospectrum of a matrix $M(\lambda)$ is related to the $\epsilon$-pseudospectrum of the isospectral reduction $R_\lambda(M;\cB)$. It says that the $\eps$-pseudospectra of the reduced matrix is contained in the $\eps$-pseudospectra of the original matrix for each $\eps>0$.

\begin{theorem} \label{thm:red}
For $M(\lambda) \in\BW^{n\times n}_{\pi}$ let $\mathcal{B}\subset N$. Then $\sigma_\eps(R(M;\cB)) \subseteq \sigma_\eps(M)$ for any $\epsilon>0$
provided the $\complex^n$ and $\complex^{|\cB|}$ norms in the pseudospectra definitions satisfy \eqref{eq:normprop}.
\end{theorem}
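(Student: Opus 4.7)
The plan is to construct, for each $\lambda_0$ in $\sigma_\eps(R(M;\cB))$, a unit vector $\bu\in\mathbb{C}^n$ realizing $\lambda_0$ as an $\eps$-pseudoeigenvalue of $M$ via the eigenvalue-perturbation definition \ref{def:psisp}(a). Away from a finite exceptional set $Z\subset\mathbb{C}$ where either $M(\lambda)$ or $M_{\cI\cI}(\lambda)-\lambda I$ is undefined, pick a unit vector $\tbv_\cB\in\mathbb{C}^{|\cB|}$ (in $\|\cdot\|'$) with $\|(R(M;\cB)(\lambda_0) - \lambda_0 I)\tbv_\cB\|' < \eps$, as provided by the open set inside the closure defining $\sigma_\eps(R(M;\cB))$. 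Arranging $\cB$-indices before $\cI$-indices so that
$$M = \begin{bmatrix} M_{\cB\cB} & M_{\cB\cI} \\ M_{\cI\cB} & M_{\cI\cI} \end{bmatrix},$$
the candidate lift
$$\bv = \begin{bmatrix} \tbv_\cB \\ \bv_\cI \end{bmatrix}, \qquad \bv_\cI := -(M_{\cI\cI}(\lambda_0) - \lambda_0 I)^{-1} M_{\cI\cB}(\lambda_0)\tbv_\cB,$$
is the natural choice suggested by the Schur-complement identity underlying definition \ref{def1}. The invertibility required here is guaranteed by the proof of lemma \ref{lemma1} for all $\lambda_0\notin Z$.

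The key step is then a direct block multiplication: by construction, the $\cI$-block of $(M(\lambda_0) - \lambda_0 I)\bv$ vanishes, and the $\cB$-block collapses exactly to $(R(M;\cB)(\lambda_0) - \lambda_0 I)\tbv_\cB$, which is the defining property of the Schur complement. Two applications of property \eqref{eq:normprop} then yield $\|\bv\| \geq \|\tbv_\cB\|' = 1$ and $\|(M(\lambda_0) - \lambda_0 I)\bv\| = \|(R(M;\cB)(\lambda_0) - \lambda_0 I)\tbv_\cB\|' < \eps$, where the first comes from splitting $\bv$ and the second from splitting $(M(\lambda_0)-\lambda_0 I)\bv$ whose $\cI$-block is $0$. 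Normalizing $\bu := \bv/\|\bv\|$ produces a unit vector in $\mathbb{C}^n$ with $\|(M(\lambda_0)-\lambda_0 I)\bu\| \le \|(M(\lambda_0)-\lambda_0 I)\bv\| < \eps$, placing $\lambda_0$ in the open set whose closure defines $\sigma_\eps(M)$.

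The main technical obstacle, which I expect to be routine, is handling the closure in definition \ref{def:psisp}(a), since the construction above fails on $Z$. I would deal with this by first proving the inclusion on the open inner sets minus $Z$, and then passing to closures: since $Z$ is finite and the inner set defining $\sigma_\eps(R(M;\cB))$ is open in $\mathbb{C}$, removing $Z$ does not alter its closure, so the pointwise argument upgrades directly to $\sigma_\eps(R(M;\cB)) \subseteq \sigma_\eps(M)$. The essential idea of the proof is really just that every Schur-complement-style lifting of a pseudoeigenvector is automatically a pseudoeigenvector, which is why property \eqref{eq:normprop} is both necessary and sufficient for the argument.
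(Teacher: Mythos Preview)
Your proposal is correct and follows essentially the same approach as the paper: lift a pseudoeigenvector of the reduced matrix to one for $M$ via the Schur-complement construction, verify by block multiplication that the $\cI$-block vanishes and the $\cB$-block reproduces $(R(M;\cB)-\lambda_0 I)\tbv_\cB$, then normalize using \eqref{eq:normprop}. The only organizational difference is in handling the exceptional points: the paper works locally, fixing $\tilde\lambda_0$ and passing to a punctured neighborhood on which $M$ is defined and $M_{\cI\cI}-\lambda I$ is invertible, whereas you excise a global finite set $Z$ and take closures at the end; both are equivalent. One small imprecision: your $Z$ should explicitly include $\sigma(M_{\cI\cI})$ (the poles of $(M_{\cI\cI}-\lambda I)^{-1}$), not merely points where $M_{\cI\cI}-\lambda I$ is ``undefined''; and the closing remark that \eqref{eq:normprop} is ``necessary'' is unsupported and should be dropped.
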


\begin{proof}
For $M(\lambda)\in\mathbb{W}^{n\times n}_{\pi}$ let $\mathcal{B}$ and $\mathcal{I}$ form a non-empty partition of $N$. We assume, without loss of generality, that for a vector $\bv\in\complex^n$ we have $\bv = (\bv_\cB^T,\bv_\cI^T)^T$.

For $\widetilde{\lambda}_0 \in\mathbb{C}$ and $\epsilon>0$ suppose there is a unit vector $\mathbf{v}_{\mathcal{B}}\in\mathbb{C}^{|\mathcal{B}|}$ such that
\begin{equation}\label{ineq:last}
||(R(M;\mathcal{B})-\widetilde{\lambda}_0 I)\mathbf{v}_{\mathcal{B}}||<\epsilon.
\end{equation}
As $\sigma(M_{\mathcal{II}})$ and $\overline{\dom(M)}$ are finite sets, then by continuity there is a neighborhood $U$ of $\widetilde{\lambda}_0$ such that
\begin{enumerate}[(i)]
\item $M(\lambda)\in\mathbb{C}^{n\times n}$ for $\lambda\in U-\{\widetilde{\lambda}_0\}$;
\item $\sigma(M_{\mathcal{II}})\cap(U-\{\widetilde{\lambda}_0\})=\emptyset$; and
\item $||(R(M;\mathcal{B})-\lambda I)\mathbf{v}_{\mathcal{B}}||<\epsilon$ for $\lambda\in U-\{\widetilde{\lambda}_0\}$.
\end{enumerate}
Observe that, for each $\lambda_0\in U-\{\widetilde{\lambda}_0\}$ it follows that the vector
$$\mathbf{v}_{\mathcal{I}}=-(M(\lambda_0)_{\mathcal{II}}-\lambda_0 I)^{-1}M(\lambda_0)_{\mathcal{IB}}\mathbf{v}_{\mathcal{B}}$$
is defined. Let $\bv = (\bv_\cB^T,\bv_\cI^T)^T$ and note that
\begin{align*}
(M(\lambda_0)-\lambda_0 I)\mathbf{v}&=
\left[\begin{array}{c}
(M-\lambda I)_{\mathcal{BB}}\mathbf{v}_{\mathcal{B}}+(M-\lambda I)_{\mathcal{BI}}\mathbf{v}_{\mathcal{I}}\\
(M-\lambda I)_{\mathcal{IB}}\mathbf{v}_{\mathcal{B}}+(M-\lambda I)_{\mathcal{II}}\mathbf{v}_{\mathcal{I}}
\end{array}\right]\Big|_{\lambda=\lambda_0}\\
&=
\left[\begin{array}{c}
M_{\mathcal{BB}}\mathbf{v}_{\mathcal{B}}-M_{\mathcal{BI}}(M_{\mathcal{II}}-\lambda I)^{-1}M_{\mathcal{IB}}\mathbf{v}_{\mathcal{B}}\\
M_{\mathcal{IB}}\mathbf{v}_{\mathcal{B}}-(M_{\mathcal{II}}-\lambda I)(M_{\mathcal{II}}-\lambda I)^{-1}M_{\mathcal{IB}}\mathbf{v}_{\mathcal{B}}
\end{array}\right]\Big|_{\lambda=\lambda_0}\\
&=\left[\begin{array}{c}
(R(M;\mathcal{B})-\lambda I)\mathbf{v}_{\mathcal{B}}\\
0
\end{array}\right]\Big|_{\lambda=\lambda_0}.
\end{align*}

By the property \eqref{eq:normprop} of the norms in $\complex^n$ and $\complex^{|\cB|}$ we must have
\begin{equation}\label{eq:basicineq}
 \| (M(\lambda_0)-\lambda_0 I)\bv \| = \| (R(M(\lambda_0);\cB) - \lambda_0 I) \bv_\cB \| < \eps.
\end{equation}
As $\mathbf{v}_{\mathcal{B}}\neq\mathbf{0}$, consider the unit vector $\bu = \bv / \| \bv \| \in \complex^n$. Again by \eqref{eq:normprop} we have $\| \bv \| \geq \| \bv_\cB \| = 1$. Hence, we get the bound
\[
 \| (M(\lambda_0) - \lambda_0 I) \bu \| = \frac{ \| (M(\lambda_0) - \lambda_0 I) \bv \|} { \| \bv \| } \leq \| (M(\lambda_0)-\lambda_0 I) \bv \| < \eps,
\]
where the last inequality comes from \eqref{eq:basicineq}. This implies $\lambda_0 \in \sigma_\eps(M)$.

As this holds for any $\lambda_0\in U-\{\widetilde{\lambda}_0\}$ then $\widetilde{\lambda}_0\in cl(\sigma_\eps(M))$. Since $\sigma_\eps(M)$ is a closed set then in fact $\widetilde{\lambda}_0\in\sigma_\eps(M)$. Since $\widetilde{\lambda}_0$ is an arbitrary point in $\sigma_\eps(R(M;\mathcal{B}))$, the result follows by inequality (\ref{ineq:last}).
\end{proof}

\begin{remark}
Theorem \ref{thm:red} states that the $\eps$-pseudospectrum of a matrix becomes a subset of this region as the matrix is reduced. However, for $\eps$-pseudoresonances of a matrix there is no such inclusion result.
\end{remark}

\begin{example}\label{ex:spring3}
In the mass-spring system of example~\ref{ex:spring0}, we consider four different sets of boundary nodes $\{1,2,3,4\} \supset \{1,2,4\} \supset \{1,4\} \supset \{1\}$. Note that theorem~\ref{thm:red} implies that the corresponding pseudospectra for a given $\epsilon$ obey the same inclusions. This is shown in figure \ref{fig:spring4} for $\eps = 1$, $10^{-1/2}$, and $\eps=10^{-1}$.

In physical terms, this means that as we increase the number of internal degrees of freedom (or decrease the number of boundary nodes), it becomes harder to find frequencies for which there is a displacement that generates forces of magnitude below a certain fixed level. Hence the less boundary nodes we have, the more robust are the frequencies that generate small forces.
\end{example}

\begin{figure}
\begin{center}
\begin{tabular}{c@{}c}
 \raisebox{5em}{\rotatebox{90}{$\eps = 1$}}  & \includegraphics[width=0.8\textwidth]{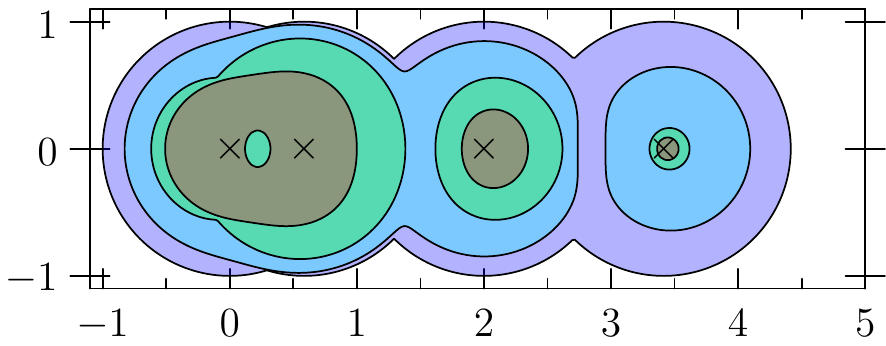}\\
 \raisebox{4em}{\rotatebox{90}{$\eps=10^{-1/2}$}} & \includegraphics[width=0.8\textwidth]{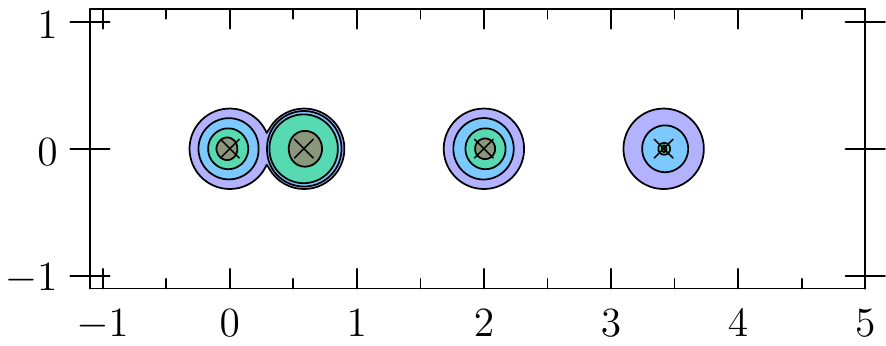}\\
 \raisebox{4em}{\rotatebox{90}{$\eps=10^{-1}$}} & \includegraphics[width=0.8\textwidth]{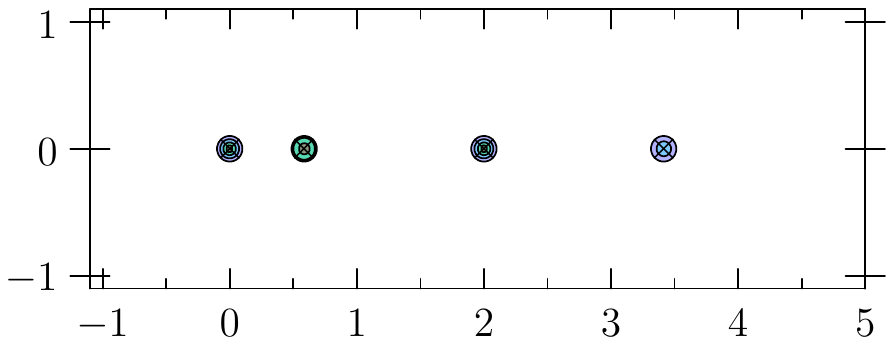}
\end{tabular}
\end{center}
\caption{Pseudospectra of the matrix $K$ from the mass-spring system of example~\ref{ex:spring0} (blue) together the pseudospectra for the reduced matrices where the terminal nodes are $\cB =\{1,2,4\}$ (cyan), $\cB = \{1,4\}$ (green) and $\cB = \{1\}$ (red). Note how the pseudospectra shrink as the number of boundary nodes decreases.}
\label{fig:spring4}
\end{figure}

Notice that the inclusion given in theorem \ref{thm:red} is not a strict inclusion. In fact, it may be the case that a matrix $M$ and its reduction $R(M;\mathcal{B})$ have the same pseudospectra as the following example demonstrates.

\begin{example}
Consider the matrix $M\in\mathbb{C}^{4\times 4}$ given by
\[
M=
\begin{bmatrix}
1&1&0&0\\
1&1&0&0\\
0&0&1&1\\
0&0&1&1
\end{bmatrix}
~~\text{and its reduction}~~
R(M;\cB) =
\begin{bmatrix}
\frac{\lambda}{\lambda-1}&0&0\\
0&1&1\\
0&1&1
\end{bmatrix},
\]
where $\cB = \{2,3,4\}$. Computing the Euclidean induced matrix norm of the resolvents we get
\[
 \begin{aligned}
 \| (M-\lambda I)^{-1} \| &= \max( |\lambda|^{-1}, |\lambda-2|^{-1} ) ~~\text{and}\\
 \| (R(M;\cB) - \lambda I)^{-1} \| & = \max ( |\lambda|^{-1}, |\lambda-2|^{-1},  |\lambda-1||\lambda|^{-1}|\lambda-2|^{-1} ).
 \end{aligned}
\]

To show that the pseudospectra of $M$ and $R(M;\mathcal{B})$ are the same, we only need to demonstrate that the norms above are equal. This happens if we can show the inequality
\begin{equation}\label{eq:ni1}
 |\lambda-1||\lambda|^{-1}|\lambda-2|^{-1} \leq \max( |\lambda|^{-1}, |\lambda-2|^{-1} ).
\end{equation}
Notice that the triangle inequality implies
\begin{equation}\label{eq:ni2}
 | \lambda - 1 | \leq  \frac{1}{2} |\lambda - 2| + \frac{1}{2} |\lambda| \leq \max(|\lambda|,|\lambda-2|).
\end{equation}
Inequality \eqref{eq:ni1} follows for $\lambda \notin \{0,2\}$ by dividing \eqref{eq:ni2} by $|\lambda||\lambda-2|$. As $\{0,2\} \subset \sigma(M),\sigma(R(M;\mathcal{B}))$, then both $0$ and $2$ are included in the pseudospectra of these matrices. We conclude that $\sigma_{\eps}( M ) = \sigma_{\eps}(R(M;\cB))$ for all $\eps>0$.
\end{example}

\section{Conclusion}
Isospectral graph reductions allow one to reduce the size of a matrix while maintaining its set of eigenvalues up to a known set. Prior to this paper it was known that a matrix could be isospectrally reduced over any principal submatrix of a particular form. One of our main results removes this restriction. This new, more general method of isospectral reduction allows one to reduce a matrix over any principal submatrix without any other consideration (other than existence). Consequently, we are able to study matrix reduction in a simpler and computationally more efficient way compared with those used in \cite{Bunimovich:2011:IGR,Bunimovich:2012:IGT,Bunimovich:2012:IC}.

An additional improvement to previous work is the introduction of a spectral inverse. The spectral inverse of a matrix, which interchanges a matrix' spectrum and inverse spectrum, allows one to use the previous results found in \cite{Bunimovich:2011:IGR,Bunimovich:2012:IGT,Bunimovich:2012:IC} to analyze the inverse spectrum of a matrix. In particular, we show that the Gershgorin-type estimates in \cite{Bunimovich:2011:IGR} can also be used to estimate a matrix' inverse spectrum.

One of our main goals here is determining whether the notion of pseudospectra can be extended to the class of matrices we consider. In fact, because a matrix with rational function entries has both a spectrum and inverse spectrum we are able to extend the notion of pseudospectrum to such matrices and also introduce the notion of inverse pseudospectrum. Moreover, we are able to show that the pseudospectrum of a matrix shrinks under reduction. Therefore, the eigenvalues of a reduced matrix are less susceptible to perturbations. This fact has implications to systems modeled by reduced matrices.

For instance, the mass spring network we consider throughout this paper is modeled using a matrix with integer entries. However, if we have access to only  some terminal nodes, the frequency response at the terminals is a matrix with rational function entries which can be obtained by reducing the stiffness matrix where all nodes are terminal nodes. Our result shows that having less terminal nodes, means the eigenvalues of the frequency response are less susceptible to perturbations than the eigenvalues of the matrix where all the nodes are terminal nodes. 

\section*{Acknowledgements}
The work of F. Guevara Vasquez was partially supported by the National Science Foundation grant DMS-0934664.

\appendix

\section{Properties of the Degree of a Rational Function}
\label{app:degree}
Suppose $w_i=p_i(\lambda)/q_i(\lambda)$ where $p_i(\lambda)$, $q_i(\lambda)\in\mathbb{C}[\lambda]$ and $q_i(\lambda)$ is nonzero for $1\leq i \leq n$. Then for $1\leq i,j\leq n$ it is easy to show the following properties hold:
\begin{align}
&\pi\big(\sum_{i=1}^n w_i\big)=\max_{1\leq i\leq n}\big\{\pi(w_i):w_i\neq 0\big\}; \label{eq:sum}\\
&\pi\big(\prod_{i=1}^n w_i\big)=
 \begin{cases}
  \dsp\sum_{i=1}^n\pi(w_i) & ~\mbox{if}~ \forall i \in \{1,\cdots,n\} ~w_i \neq 0\\
  0 & ~\mbox{otherwise};
 \end{cases}
 \label{eq:prod}\\
&\pi\big(w_i/w_j\big)=
 \begin{cases}
  \pi(w_i)-\pi(w_j)  & ~\mbox{if}~ w_i \neq 0 \\
  0 & ~\text{otherwise}
 \end{cases} \ \ \text{for} \ \ w_j\neq 0; \ \ \text{and}\label{eq:ratio}\\
&\pi(w_i-\lambda)=1 \ \ \text{for} \ \ w_i\in\BW_{\pi}.\label{eq:lambda}
\end{align}

\section{Eigenvalue Inclusions $\&$ Equivalence of Definitions \ref{def:psisp}--\ref{def:psir}}
\label{app:psi}
Here we first show that the three pseudoeigenvalue regions in definition \ref{def:psisp}(a)--(c) are equivalent and include the eigenvalues of the matrix. The proof relies on the fact that the sets
\begin{enumerate}[(a)]
 \item
$\sigma_\epsilon(M)=\{\lambda \in \complex : || (M - \lambda I) \bv|| < \eps~\text{for some}~\bv \in \complex^n~\text{with}~ ||\bv||=1\}$;
 \item
$\sigma_\epsilon(M) = \{ \lambda \in \complex : || (M-\lambda I)^{-1} || > \epsilon ^{-1}\}\cup\sigma(M)$; and
 \item
$\sigma_\epsilon(M) =\{ \lambda \in \complex : \lambda \in \sigma(M+E) ~\text{for some}~E\in \complex^{n \times n}~\text{with}~||E||<\eps \}$.
\end{enumerate}
are equivalent for any $M\in\mathbb{C}^{n\times n}$ and $\epsilon>0$. This result can be obtained by following the proof at \url{http://www.cs.ox.ac.uk/pseudospectra/thms/thm1.pdf}.

\begin{theorem}\label{thm:appB}
Let $M(\lambda)\in\mathbb{W}^{n\times n}_\pi$ and $\epsilon>0$. Then definitions \ref{def:psisp}(a)--(c) are equivalent. Moreover, $\sigma(M)\subset\sigma_{\epsilon}(M)$.
\end{theorem}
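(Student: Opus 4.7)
The plan is to prove the eigenvalue inclusion $\sigma(M)\subset\sigma_\eps(M)$ directly from definition \ref{def:psisp}(a), and then to establish the equivalence (a)$\Leftrightarrow$(b)$\Leftrightarrow$(c) by specializing each condition to a fixed $\lambda_0\in\dom(M)\setminus\sigma(M)$ and invoking the classical pseudospectrum equivalence for $\complex^{n\times n}$ matrices at the single scalar $\lambda_0$. For the inclusion, I would take $\lambda_0\in\sigma(M)$ and write $\det(M(\lambda)-\lambda I)=p(\lambda)/q(\lambda)$ in lowest terms, so that $p(\lambda_0)=0$ while $q(\lambda_0)\neq 0$. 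Clearing denominators row-by-row in $M(\lambda)-\lambda I$ produces a polynomial matrix $\widetilde{M}(\lambda)\in\complex[\lambda]^{n\times n}$ whose determinant still vanishes at $\lambda_0$; hence $\widetilde{M}(\lambda_0)$ has a unit vector $\bv$ in its kernel. Translating back through the convention $(M(\lambda)-\lambda I)\bv\equiv(M(s)-sI)\bv|_{s=\lambda}$ gives $\norm{(M(\lambda_0)-\lambda_0 I)\bv}=0<\eps$, which places $\lambda_0$ in the pre-closure set of (a), and hence in $\sigma_\eps(M)$.

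For the equivalence, I would fix $\lambda_0\in\dom(M)\setminus\sigma(M)$ and set $A:=M(\lambda_0)\in\complex^{n\times n}$. Because $\lambda_0$ lies in the domain of $M$ and is not an eigenvalue, the three conditions appearing inside definitions (a), (b), (c) evaluated at $\lambda=\lambda_0$ coincide term-for-term with the classical pseudospectrum conditions for the fixed complex matrix $A$ at the scalar $\lambda_0$. The classical equivalence cited at the beginning of this appendix then implies that the three conditions agree at $\lambda_0$. Consequently, the pre-closure sets $\Sigma_a$, $\Sigma_b$, $\Sigma_c$ attached to definitions (a), (b), (c) coincide on the cofinite set $\dom(M)\setminus\sigma(M)$, whose complement $\overline{\dom(M)}\cup\sigma(M)$ in $\complex$ is finite.

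The main obstacle is to check that no discrepancy between the three pre-closure sets survives after taking closures at the finitely many exceptional points in $\overline{\dom(M)}\cup\sigma(M)$, where one set could \emph{a priori} include a point that another omits. The key observation is that for any fixed unit $\bv$ the map $\lambda\mapsto\norm{(M(\lambda)-\lambda I)\bv}$ is the norm of a vector of rational functions and is therefore continuous on its cofinite domain; likewise $\lambda\mapsto\norm{(M(\lambda)-\lambda I)^{-1}}$ is continuous off a finite set and blows up as $\lambda$ approaches any point of $\sigma(M)$ from within $\dom(M)\setminus\sigma(M)$. Hence every $\lambda_0$ witnessing one of the three conditions either already lies in the common region $\dom(M)\setminus\sigma(M)$, or can be approximated by nearby points there, and such approximating points witness all three conditions simultaneously by the previous paragraph. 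Taking closures therefore collapses the three candidate sets to a single $\sigma_\eps(M)$, completing the proof.
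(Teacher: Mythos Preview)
Your approach is essentially the paper's: both reduce to the classical equivalence for complex matrices on the cofinite set $\dom(M)\setminus\sigma(M)$ and then recover the finitely many exceptional points by continuity and closure. The organization differs only cosmetically.

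There is, however, one genuine technical slip in your first paragraph. The ``translating back'' step---asserting that a kernel vector $\bv$ of the row-cleared polynomial matrix $\widetilde{M}(\lambda_0)$ yields $(M(s)-sI)\bv\big|_{s=\lambda_0}=0$---fails when some row multiplier $L_i$ vanishes at $\lambda_0$ to higher order than the corresponding component of $\widetilde{M}(s)\bv$. Concretely, take
\[
M(\lambda)=\begin{bmatrix}1/\lambda & 1\\ -1 & 0\end{bmatrix}\in\BW_\pi^{2\times 2},\qquad \det(M(\lambda)-\lambda I)=\lambda^2,\qquad \lambda_0=0.
\]
Here $\widetilde{M}(0)=\begin{bmatrix}1&0\\-1&0\end{bmatrix}$ has kernel spanned by $\bv=(0,1)^T$, but $(M(s)-sI)\bv=(1,-s)^T$, which evaluates to $(1,0)^T\neq 0$ at $s=0$. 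In fact no constant unit vector annihilates $M(s)-sI$ at $s=0$ under the paper's convention. (The paper's own proof makes the same unjustified assertion at the corresponding step.)

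Fortunately your final paragraph already contains the correct repair: since $\det\bigl((M(\lambda)-\lambda I)^{-1}\bigr)=q(\lambda)/p(\lambda)$ has a pole at every $\lambda_0\in\sigma(M)$, at least one entry of the resolvent blows up there, so $\norm{(M(\lambda)-\lambda I)^{-1}}\to\infty$ along $\dom(M)\setminus\sigma(M)\ni\lambda\to\lambda_0$. Hence nearby points witness condition (b), and by the classical equivalence also (a) and (c), placing $\lambda_0$ in the common closure. You should route the inclusion $\sigma(M)\subset\sigma_\eps(M)$ through this resolvent blow-up rather than through the flawed kernel-vector argument.
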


\begin{proof}
For $M(\lambda)\in\mathbb{W}^{n\times n}_\pi$ and $\epsilon>0$ let
\begin{enumerate}[(a)]
 \item
$\sigma_{\epsilon,a}(M)=\{\lambda \in \complex : || (M(\lambda) - \lambda I) \bv|| < \eps~\text{for some}~\bv \in \complex^n~\text{with}~ ||\bv||=1\}$;
 \item
$\sigma_{\epsilon,b}(M) = \{ \lambda \in \complex : || (M(\lambda)-\lambda I)^{-1} || > \epsilon ^{-1}\}\cup\sigma(M)$; and
 \item
$\sigma_{\epsilon,c}(M) =\{ \lambda \in \complex : \lambda \in \sigma(M+E) ~\text{for some}~E\in \complex^{n \times n}~\text{with}~||E||<\eps \}$.
\end{enumerate}
Suppose $\lambda_0\in\sigma(M)$. Then there is a unit vector $\mathbf{v}\in\mathbb{C}^n$ such that
$$(M(\lambda)-\lambda I)\mathbf{v}=w(\lambda)\in\mathbb{W}^n_{\pi}$$
where $w(\lambda_0)=\mathbf{0}$. Since $\sigma(M)$ and $\overline{\dom(M)}$ are finite then there is a neighborhood $U\ni\lambda_0$ such that for $\widetilde{U}=U-\{\lambda_0\}$:
\begin{enumerate}[(i)]
\item $\widetilde{U}\subset \dom(M)$;
\item $||w(\lambda)||<\epsilon$ for $\lambda\in\widetilde{U}$; and
\item $(\sigma(M)-\{\lambda_0\})\cap\widetilde{U}=\emptyset$.
\end{enumerate}
In particular, (ii) implies the set $\widetilde{U}\subset\sigma_{\epsilon,a}(M)$.

For $\lambda\in \dom(M)$ observe that the matrix $M(\lambda)\in\mathbb{C}^{n\times n}$. Since (a)--(c) are equivalent for any complex valued matrix then $$\widetilde{U}\subset \sigma_{\epsilon,a}(M)-\{\lambda_0\},\sigma_{\epsilon,b}(M)-\{\lambda_0\},\sigma_{\epsilon,c}(M)-\{\lambda_0\}.$$ This in turn implies
\begin{equation}\label{eq:last}
\sigma(M)\subset cl\big(\sigma_{\epsilon,a}(M)\big),cl\big(\sigma_{\epsilon,b}(M)-\sigma(M)\big),cl\big(\sigma_{\epsilon,c}(M)\big).
\end{equation}
In particular, if $\sigma_{\epsilon,b}(M)-\sigma(M)$ is open then $\sigma_{\epsilon,b}(M)$ is open.

Note that the norm of a vector or matrix is continuous with respect to its entries. Also, the eigenvalues of a matrix depend continuously on the matrix entries. Thus, the sets $\sigma_{\epsilon,a}(M)$, $\sigma_{\epsilon,b}(M)-\sigma(M)$, and $\sigma_{\epsilon,c}(M)$ are open.
Therefore, the set $\sigma_{\epsilon,b}(M)$ is also open.

Since the sets (a)--(c) are equivalent on $\dom(M)$ and $\overline{\dom(M)}$ is finite then
$$\sigma_{\epsilon,a}(M)\cap \dom(M)=\sigma_{\epsilon,b}(M)\cap \dom(M)=\sigma_{\epsilon,c}(M)\cap \dom(M)$$
is an open set. Taking the closure it follows that
$$cl\big(\sigma_{\epsilon,a}(M)\big)=cl\big(\sigma_{\epsilon,b}(M)-\sigma(M)\big)=cl\big(\sigma_{\epsilon,c}(M)\big)$$
implying that definitions \ref{def:psisp}(a)--(c) are equivalent. Moreover, equation \ref{eq:last} implies $\sigma(M)\subset\sigma_{\eps}(M)$.
\end{proof}

The proof that definitions \ref{def:psir}(a)--(c) are equivalent is very similar to the proof of theorem \ref{thm:appB} and is therefore omitted.

\bibliographystyle{abbrvnat}
\bibliography{psewbib}

\begin{thebibliography}{11}
\providecommand{\natexlab}[1]{#1}
\providecommand{\url}[1]{\texttt{#1}}
\expandafter\ifx\csname urlstyle\endcsname\relax
  \providecommand{\doi}[1]{doi: #1}\else
  \providecommand{\doi}{doi: \begingroup \urlstyle{rm}\Url}\fi

\bibitem[Boulton et~al.(2008)Boulton, Lancaster, and
  Psarrakos]{Boulton:2008:PMP}
L.~Boulton, P.~Lancaster, and P.~Psarrakos.
\newblock On pseudospectra of matrix polynomials and their boundaries.
\newblock \emph{Math. Comp.}, 77\penalty0 (261):\penalty0 313--334
  (electronic), 2008.
\newblock ISSN 0025-5718.
\newblock \doi{10.1090/S0025-5718-07-02005-4}.

\bibitem[Brauer(1947)]{Brauer:1947:LCR}
A.~Brauer.
\newblock Limits for the characteristic roots of a matrix. {II}.
\newblock \emph{Duke Math. J.}, 14:\penalty0 21--26, 1947.
\newblock ISSN 0012-7094.

\bibitem[Brualdi(1982)]{Brualdi:1982:MED}
R.~A. Brualdi.
\newblock Matrices, eigenvalues, and directed graphs.
\newblock \emph{Linear and Multilinear Algebra}, 11\penalty0 (2):\penalty0
  143--165, 1982.
\newblock ISSN 0308-1087.
\newblock \doi{10.1080/03081088208817439}.

\bibitem[Bunimovich and Webb(2012{\natexlab{a}})]{Bunimovich:2011:IGR}
L.~A. Bunimovich and B.~Z. Webb.
\newblock Isospectral graph reductions and improved estimates of matrices’
  spectra.
\newblock \emph{Linear Algebra and its Applications}, 437\penalty0
  (1):\penalty0 1429--1457, 2012{\natexlab{a}}.
\newblock ISSN 0024-3795.
\newblock \doi{10.1016/j.laa.2012.04.031}.

\bibitem[Bunimovich and Webb(2012{\natexlab{b}})]{Bunimovich:2012:IC}
L.~A. Bunimovich and B.~Z. Webb.
\newblock Isospectral compression and other useful isospectral transformations
  of dynamical networks.
\newblock \emph{Chaos}, 22:\penalty0 1429--1457, 2012{\natexlab{b}}.
\newblock ISSN 1089-7682.
\newblock \doi{10.1063/1.4739253}.

\bibitem[Bunimovich and Webb(2012{\natexlab{c}})]{Bunimovich:2012:IGT}
L.~A. Bunimovich and B.~Z. Webb.
\newblock Isospectral graph transformations, spectral equivalence, and global
  stability of dynamical networks.
\newblock \emph{Nonlinearity}, 25\penalty0 (1):\penalty0 211--254,
  2012{\natexlab{c}}.
\newblock ISSN 0951-7715.
\newblock \doi{10.1088/0951-7715/25/1/211}.

\bibitem[Curtis et~al.(1998)Curtis, Ingerman, and Morrow]{Curtis:1998:cpg}
E.~B. Curtis, D.~Ingerman, and J.~A. Morrow.
\newblock Circular planar graphs and resistor networks.
\newblock \emph{Linear Algebra Appl.}, 283\penalty0 (1-3):\penalty0 115--150,
  1998.
\newblock ISSN 0024-3795.
\newblock \doi{10.1016/S0024-3795(98)10087-3}.

\bibitem[Gershgorin(1931)]{Gershgorin:1931:UDA}
S.~Gershgorin.
\newblock \"{U}ber die abgrenzung der eigenwerte einer matrix.
\newblock \emph{Izv. Akad. Nauk SSSR Ser. Mat.}, 1:\penalty0 749--754, 1931.

\bibitem[Lancaster and Psarrakos(2005)]{Lancaster:2005:PMP}
P.~Lancaster and P.~Psarrakos.
\newblock On the pseudospectra of matrix polynomials.
\newblock \emph{SIAM J. Matrix Anal. Appl.}, 27\penalty0 (1):\penalty0 115--129
  (electronic), 2005.
\newblock ISSN 0895-4798.
\newblock \doi{10.1137/S0895479804441420}.

\bibitem[Trefethen and Embree(2005)]{Trefethen:2005:SP}
L.~N. Trefethen and M.~Embree.
\newblock \emph{Spectra and pseudospectra}.
\newblock Princeton University Press, Princeton, NJ, 2005.
\newblock ISBN 978-0-691-11946-5; 0-691-11946-5.
\newblock The behavior of nonnormal matrices and operators.

\bibitem[Varga(2004)]{Varga:2004:GC}
R.~S. Varga.
\newblock \emph{Gershgorin and His Circles}.
\newblock Springer-Verlag, Berlin Heidelberg, 2004.

\end{thebibliography}
\end{document}